\documentclass[11pt,letterpaper]{amsart}

\usepackage{amsmath}
\usepackage{amsthm}
\usepackage{amssymb}
\usepackage{amscd}
\usepackage{amsfonts}
\usepackage{amsbsy}
\usepackage[noadjust]{cite} 
\usepackage{epsfig,afterpage}
\usepackage{paralist}
\usepackage{psfrag}
\usepackage{mathrsfs}       
\usepackage{verbatim}

\addtolength{\textwidth}{50pt} \addtolength{\textheight}{52pt}

\newtheorem {theorem} {Theorem}

\newtheorem {proposition} [theorem]{Proposition}

\newtheorem {lemma}  [theorem]{Lemma}
\newtheorem {remark} [theorem]{\sc Remark}

\title[Principal Bautin ideal of some monodromic singularities]{Principal Bautin ideal of monodromic singularities with inverse integrating factors}

\thanks{The authors are partially supported by Agencia Estatal de Investigaci\'on grant number PID2020-113758GB-I00 and an AGAUR grant number 2021SGR-01618.}

\author[Isaac A. Garc\'ia and Jaume Gin\'e]{Isaac A. Garc\'ia and Jaume Gin\'e}

\address{Departament de Matem\`atica, Universitat de Lleida, Avda. Jaume II, 69, 25001 Lleida, Spain}

\email{isaac.garcia@udl.cat}
\email{jaume.gine@udl.cat}

\subjclass[2000]{34Cxx, 37G15, 37G10}

\keywords{Center, periodic orbits, Poincar\'e map, cyclicity}

\begin{document}

\begin{abstract}
We analyze the structure of the Poincar\'e map $\Pi$ associated to a monodromic singularity of an analytic family of planar vector fields. We work under two assumptions. The first one is that the family possesses an inverse integrating factor that can be expanded in Laurent series centered at the singularity after a weighted polar blow-up fixed by the Newton diagram of the family. The second one is that we restrict our analysis to a subset of the monodromic parameter space that assures the non-existence of local curves with zero angular speed. The conclusions are that the asymptotic Dulac expansion of $\Pi$ does not contain logarithmic terms, indeed it admits a formal power series expansion with a unique independent generalized Poincar\'e-Lyapunov quantity, which can be computed under some explicit conditions. Moreover we also give conditions that guarantee the analyticity of $\Pi$, in which case we show that the Bautin ideal is principal and therefore the cyclicity of the singularity with respect to perturbation within the family is zero.
\end{abstract}

\maketitle

\section{Introduction and main results}

We consider families of real analytic planar differential systems
\begin{equation}\label{DCC-1}
\dot{x}= P(x,y; \lambda),  \ \ \dot{y}=Q(x,y; \lambda),
\end{equation}
depending analytically on the parameters $\lambda \in \mathbb{R}^p$. We associate to \eqref{DCC-1} the family of vector fields $\mathcal{X} = P(x,y; \lambda) \partial_x + Q(x,y; \lambda) \partial_y$. Throughout the work we restrict the family to the parameter space $\Lambda \subset \mathbb{R}^p$ so that the origin $(x,y)=(0,0)$ is made sure to be a {\it monodromic singularity} of the whole family \eqref{DCC-1}. That means that $P(0, 0; \lambda) = Q(0, 0; \lambda) = 0$ and the local orbits of $\mathcal{X}$ turn around the origin for any $\lambda \in \Lambda$.
\newline

Let $(p,q) \in W(\mathbf{N}(\mathcal{X}))$ be two weights associated to the Newton diagram $\mathbf{N}(\mathcal{X})$ of $\mathcal{X}$, see subsection \ref{Sec-IB2} for the definition of $\mathbf{N}(\mathcal{X})$. One we pick up $(p,q)$ we obtain an expansion
\begin{equation}\label{campo-X}
\mathcal{X} = \sum_{j \geq r} \mathcal{X}_j
\end{equation}
where $r \geq 1$ and $\mathcal{X}_j$ are $(p,q)$-quasihomogeneous vector fields of degree $j$. We call $\mathcal{X}_r$ the leading part of $\mathcal{X}$.

We perform the {\it weighted polar} blow-up $(x,y) \mapsto (\rho, \varphi)$ given by $(x,y) = (\rho^p \cos\varphi, \rho^q \sin\varphi)$ transforming \eqref{DCC-1} into the polar vector field $\dot{\rho} = R(\varphi, \rho)$, $\dot{\varphi} = \Theta(\varphi, \rho)$ and consider the differential equation
\begin{equation}\label{eq3}
\frac{d\rho}{d\varphi} \, = \, \mathcal{F}(\varphi, \rho),
\end{equation}
where $\mathcal{F}(\varphi, \rho) = R(\varphi, \rho)/\Theta(\varphi, \rho)$ is a function well defined in $C \backslash \Theta^{-1}(0)$ being the cylinder
\begin{equation}\label{cylinder}
C  \, = \, \left\{ (\varphi, \rho) \in \mathbb{S}^1 \times \mathbb{R} \, : \, 0 \leq \rho \ll 1 \right\}, \ \mbox{with } \mathbb{S}^1 = \mathbb{R}/ (2 \pi \mathbb{Z}),
\end{equation}
and the zero-set
\begin{equation}\label{crit-set}
\Theta^{-1}(0) = \{ (\varphi, \rho) \in C : \Theta(\varphi, \rho) = 0, \rho \geq 0 \}.
\end{equation}
We define the set of {\it characteristic directions} $\Omega_{pq} = \{ \varphi^* \in \mathbb{S}^1 : \Theta(\varphi^*, 0)  = 0 \}$. Since $R(\varphi, 0) \equiv 0$ the set $\{\rho=0 \}$ is invariant for the flow of \eqref{eq3} and it becomes either a periodic orbit or a polycycle according to whether $\Omega_{pq} = \emptyset$ or $\Omega_{pq} \neq \emptyset$, respectively.
\newline

A not locally null real-valued $C^1(C \backslash \Theta^{-1}(0))$ function $V(\varphi, \rho)$ is an {\it inverse integrating factor} of \eqref{eq3} in $C \backslash \Theta^{-1}(0)$ if it is a solution of the linear partial differential equation
\begin{equation}\label{EDP-V}
\frac{\partial V}{\partial \varphi}(\varphi, \rho) + \frac{\partial V}{\partial \rho}(\varphi, \rho) \, \mathcal{F}(\varphi, \rho)  =
\frac{\partial \mathcal{F}}{\partial \rho}(\varphi, \rho) \, V(\varphi, \rho).
\end{equation}

\begin{remark}\label{rem-v-V}
{\rm If $v(x,y)$ is an inverse integrating factor of \eqref{DCC-1}, that is, the differential 1-form $(P(x,y; \lambda) \, dy - Q(x,y; \lambda) \, dx) / v(x,y)$ is closed in a neighborhood of the origin except at the zero-set $v^{-1}(0)$, then
\begin{equation}\label{V-polar-def}
V(\varphi, \rho) = \frac{v(\rho^p \cos\varphi, \rho^q \sin\varphi)}{\rho^{r} \, J(\varphi, \rho) \, \Theta(\varphi, \rho)}
\end{equation}
is an inverse integrating factor of \eqref{eq3} in $C \backslash \{\Theta^{-1}(0) \cup \{\rho=0\} \}$ where $J(\varphi, \rho) = \rho^{p+q-1}(p \cos^2\varphi + q \sin^2\varphi)$ is the Jacobian of the weighted polar blow-up (which only vanishes at $\rho=0$) and $r \in \mathbb{N}$ is the $(p,q)$--quasihomogeneous degree of the leading vector field associated to \eqref{DCC-1}, see \eqref{campo-X}.}
\end{remark}

We define {\it Puiseux inverse integrating factors} as those inverse integrating factors of \eqref{eq3} in $C \backslash \{\Theta^{-1}(0) \cup \{\rho=0\} \}$ that can be expanded in convergent Puiseux series about $\rho=0$ of the form
\begin{equation}\label{V-Puiseux-cil}
V(\varphi, \rho) = \sum_{i \geq m} v_i(\varphi) \rho^{i/n}
\end{equation}
whose coefficients, for a fixed $\lambda$, are analytic functions $v_i : \mathbb{S}^1 \backslash \Omega_{pq} \to \mathbb{R}$, the leading coefficient $v_m(\varphi; \lambda) \not\equiv 0$, and $(m,n) \in \mathbb{Z} \times \mathbb{N}^*$ are fixed numbers called {\it multiplicity} and {\it index}, respectively. Here we have used the notation $\mathbb{N}^* = \mathbb{N} \backslash \{0\}$. The particular case $n=1$ leads to a Laurent inverse integrating factor while $n=1$ and $m \geq 0$ produces an formal inverse integrating factors.
\newline

We are going to see that, after a change in the radial variable we can take $n=1$ in \eqref{V-Puiseux-cil} without lost of generality.

\begin{lemma}\label{V->puiseux-laurent}
Any Puiseux inverse integrating factor $V(\varphi, \rho)$ of the form \eqref{V-Puiseux-cil} with multiplicity and index $(m, n)$ can be transformed into a Laurent inverse integrating factor $\tilde{V}(\varphi, \sigma)$ via the change $\sigma^n = \rho$ whose explicit expression is $\tilde{V}(\varphi, \sigma) = V(\varphi, \sigma^n)/(n \sigma^{n-1})$ with multiplicity $m-n+1$.
\end{lemma}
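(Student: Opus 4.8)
The plan is to treat the radial substitution $\rho=\sigma^n$ as a change of the dependent variable in equation \eqref{eq3} and to follow how both the equation and its inverse integrating factor transform. First I would compute the transformed equation: from $\rho=\sigma^n$ one has $d\rho=n\sigma^{n-1}\,d\sigma$, so \eqref{eq3} becomes
\begin{equation*}
\frac{d\sigma}{d\varphi}=\tilde{\mathcal{F}}(\varphi,\sigma),\qquad \tilde{\mathcal{F}}(\varphi,\sigma)=\frac{\mathcal{F}(\varphi,\sigma^n)}{n\sigma^{n-1}}.
\end{equation*}
For $\sigma>0$ small the map $(\varphi,\sigma)\mapsto(\varphi,\sigma^n)$ is a diffeomorphism onto its image with Jacobian determinant $n\sigma^{n-1}$, so it sends $C\setminus\{\Theta^{-1}(0)\cup\{\rho=0\}\}$ to the corresponding punctured cylinder and transforms \eqref{eq3} into the displayed equation.

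The core of the argument is the transformation rule for inverse integrating factors. An inverse integrating factor of \eqref{eq3} is the same as one for the planar vector field $\partial_\varphi+\mathcal{F}\,\partial_\rho$, and under a change of variables an inverse integrating factor is pulled back and divided by the Jacobian determinant of the change; here this produces exactly the candidate $\tilde V(\varphi,\sigma)=V(\varphi,\sigma^n)/(n\sigma^{n-1})$. I would make the verification self-contained by direct substitution: writing the partial derivatives of $\tilde V$ and $\tilde{\mathcal{F}}$ via the chain rule and inserting them into the defining equation \eqref{EDP-V} for $\tilde{\mathcal{F}}$, every occurrence of the singular weight $n\sigma^{n-1}$ cancels and the identity collapses to $\partial_\varphi V+\mathcal{F}\,\partial_\rho V=V\,\partial_\rho\mathcal{F}$ evaluated at $(\varphi,\sigma^n)$, which holds precisely because $V$ satisfies \eqref{EDP-V}. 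Thus $\tilde V$ is an inverse integrating factor of the transformed equation; it is $C^1$ and not locally null on the punctured cylinder because $V$ is.

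Finally I would read off the index and multiplicity by substituting the Puiseux expansion \eqref{V-Puiseux-cil}. Since $V(\varphi,\sigma^n)=\sum_{i\ge m}v_i(\varphi)\sigma^{i}$, dividing by $n\sigma^{n-1}$ yields
\begin{equation*}
\tilde V(\varphi,\sigma)=\frac{1}{n}\sum_{i\ge m}v_i(\varphi)\,\sigma^{\,i-n+1},
\end{equation*}
a Laurent series in integer powers of $\sigma$, hence of index $1$, whose leading coefficient $v_m(\varphi)/n\not\equiv0$ sits at the power $\sigma^{m-n+1}$; this gives multiplicity $m-n+1$, as claimed. The computation is essentially routine, and the only points requiring care are the cancellation of the singular factors $n\sigma^{n-1}$ in the verification of \eqref{EDP-V} (equivalently, the precise Jacobian-division rule) and the bookkeeping that collapses the index to $1$ while shifting the leading exponent from $m/n$ to $m-n+1$.
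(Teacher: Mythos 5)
Your proposal is correct and follows essentially the same route as the paper: both rest on the rule that an inverse integrating factor transforms under the change $\rho=\sigma^n$ by division by the Jacobian $n\sigma^{n-1}$, after which the Laurent form and the shifted multiplicity $m-n+1$ are read off directly from the transformed series. Your added chain-rule verification of \eqref{EDP-V} (in which the singular factors $n\sigma^{n-1}$ do indeed cancel, as one can check) simply makes explicit the transformation rule that the paper invokes without proof.
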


After Lemma \ref{V->puiseux-laurent} it is natural that we focus our attention on inverse integrating factors $V(\varphi, \rho)$ of \eqref{eq3} defined in $C \backslash \Theta^{-1}(0)$ that can be expanded in Laurent power series about $\rho=0$ of the form
\begin{equation}\label{V-Cw-def}
V(\varphi, \rho) = \sum_{i \geq m} v_i(\varphi) \rho^{i},
\end{equation}
for some leading exponent given by the  multiplicity $m \in \mathbb{Z}$. The coefficients of this expansion are $C^1$ functions $v_i : \mathbb{S}^1 \backslash \Omega_{pq} \to \mathbb{R}$ and the leading coefficient $v_m(\varphi) \not\equiv 0$.

Given the weights $(p,q) \in W(\mathbf{N}(\mathcal{X}))$, we define the $(p,q)$-{\it critical parameters} as the elements of the subset $\Lambda_{pq} \subset \Lambda$ of the monodromic parameter space corresponding to vector fields with local curves of zero angular speed, that is,
\begin{equation}\label{critical-L}
\Lambda_{pq} = \{ \lambda \in \Lambda : \Theta^{-1}(0) \backslash \{ \rho = 0\} \neq \emptyset \}.
\end{equation}
We emphasize that $\Lambda_{pq} = \emptyset$ when $\Omega_{pq} = \emptyset$ but the converse is not true.

The Poincar\'e map $\Pi: \Sigma \subset (\mathbb{R}^+, 0) \to (\mathbb{R}^+, 0)$ associated to the monodromic singularity at the origin of \eqref{DCC-1} is defined in the transversal section $\Sigma = \{ (x, 0) \in \mathbb{R}^2 : 0 < x \ll 1 \}$.  From Il'Yashenko's work \cite{Ilyashenko} we know that $\Pi$ can be non-differentiable at the origin but it is a semiregular map that possesses a Dulac asymptotic
expansion $\Pi(x) = \eta_1 x + o(x)$ with linear leading coefficient $\eta_1 > 0$. Using the analyticity of $\mathcal{X}$ in $\lambda$, Medvedeva in \cite{Me} proves that $\Pi$ has a Dulac asymptotic expansion of the form
\begin{equation}\label{Poinc-expan}
\Pi(x) = \eta_1 x + \sum_j P_j(\log x) x^{\nu_j},
\end{equation}
where the exponents $\nu_j > 1$ are independent of $\lambda$ and grow to infinity and the coefficients of the $P_j$ are polynomials whose coefficients depend analytically on the coefficients of $\mathcal{X}$.

We consider the solution $\Phi(\varphi; \rho_0)$ of the Cauchy problem (\ref{eq3}) with initial condition $\Phi(0; \rho_0) = \rho_0 > 0$ sufficiently small. Then we reparameterize $\Sigma$ by $x = \rho_0^{p}$ so that $\Pi(\rho_0) = \Phi(2 \pi, \rho_0)$.

\subsection{The structure of the formal Poincar\'e map}

Adapting Theorem 3 of the work \cite{GaGiGr} to our context we will prove a fundamental relation between the Poincar\'e map $\Pi$ of the monodromic singularity and the inverse integrating factors $V$ of equation \eqref{eq3}. We want to emphasize that, using our notation, the framework in \cite{GaGiGr} was that $V \in C^1(C \backslash \{\rho=0\})$ and that $\{\rho=0\}$ is a periodic orbit, equivalently $\Omega_{pq} = \emptyset$ and $\Lambda_{pq} = \emptyset$. We are going to relax these restrictions allowing that $\{\rho=0\}$ be a polycycle, that is, $\Omega_{pq} \neq \emptyset$ but we keep $\Theta^{-1}(0) \backslash \{\rho=0\} = \emptyset$.

\begin{theorem} \label{teo-Fund-VPi}
In the restricted parameter space $\Lambda \backslash \Lambda_{pq}$ the fundamental equation
\begin{equation}\label{fundamental}
V(0, \Pi(\rho_0)) = V(0, \rho_0) \, \Pi'(\rho_0)
\end{equation}
holds.
\end{theorem}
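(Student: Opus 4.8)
The plan is to read off the fundamental equation from the fact that an inverse integrating factor, evaluated along a solution of \eqref{eq3}, obeys the same scalar linear homogeneous ODE as the first variation of the flow with respect to its initial condition. First I would record that on the restricted parameter space $\Lambda \backslash \Lambda_{pq}$ the hypothesis $\Theta^{-1}(0) \backslash \{\rho = 0\} = \emptyset$ forces $\Theta(\varphi, \rho) \neq 0$ for every $0 < \rho \ll 1$; by continuity $\Theta$ keeps a constant sign there, which is what makes $\varphi$ a genuine monotone parameter and $\Pi(\rho_0) = \Phi(2\pi, \rho_0)$ a true return map. Since $\{\rho = 0\}$ is invariant for \eqref{eq3} and $\varphi$ ranges over the compact interval $[0, 2\pi]$, the orbit arc $\rho = \Phi(\varphi; \rho_0)$ issuing from $\rho_0 > 0$ stays inside the open set $\{\rho > 0\} \backslash \Theta^{-1}(0)$, on which both $\mathcal{F}$ and $V$ are of class $C^1$, and its radial coordinate is bounded below by a positive constant. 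Hence $\mathcal{F}$, $\mathcal{F}_\rho$ and $V$ are continuous and bounded along the orbit, and $\Phi(\cdot; \rho_0)$ together with its derivative in $\rho_0$ are well defined on $[0, 2\pi]$.

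Next I would introduce $W(\varphi) := V(\varphi, \Phi(\varphi; \rho_0))$ and differentiate it along the orbit. Using $\partial_\varphi \Phi = \mathcal{F}(\varphi, \Phi)$ together with the defining relation \eqref{EDP-V} evaluated at $(\varphi, \Phi(\varphi; \rho_0))$,
\begin{equation*}
W'(\varphi) = \frac{\partial V}{\partial \varphi} + \frac{\partial V}{\partial \rho}\, \mathcal{F}(\varphi, \Phi) = \frac{\partial \mathcal{F}}{\partial \rho}(\varphi, \Phi)\, V(\varphi, \Phi) = a(\varphi)\, W(\varphi), \qquad a(\varphi) := \frac{\partial \mathcal{F}}{\partial \rho}(\varphi, \Phi(\varphi; \rho_0)).
\end{equation*}
In parallel I would differentiate the identity $\partial_\varphi \Phi = \mathcal{F}(\varphi, \Phi)$ with respect to the initial datum $\rho_0$ to obtain, for $u(\varphi) := \partial \Phi / \partial \rho_0$, the first variational equation $u' = a(\varphi)\, u$ with $u(0) = 1$. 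Thus $W$ and $u$ satisfy exactly the same homogeneous linear ODE.

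Since $u(\varphi) = \exp\!\left(\int_0^\varphi a(s)\, ds\right) > 0$ never vanishes, the quotient $W/u$ is constant on $[0, 2\pi]$, and evaluation at $\varphi = 0$ gives $W(\varphi)/u(\varphi) \equiv W(0) = V(0, \rho_0)$. Finally I would evaluate at $\varphi = 2\pi$, where $\Phi(2\pi; \rho_0) = \Pi(\rho_0)$ and $u(2\pi) = \Pi'(\rho_0)$, and invoke the $2\pi$-periodicity of $V$ in $\varphi$ (its coefficients are functions on $\mathbb{S}^1$), i.e. $V(2\pi, \cdot) = V(0, \cdot)$, to conclude $V(0, \Pi(\rho_0)) = V(0, \rho_0)\, \Pi'(\rho_0)$, which is precisely \eqref{fundamental}.

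I expect the only delicate point to be the first paragraph: one must guarantee that the orbit arc never meets the critical set $\Theta^{-1}(0)$—in particular that it may cross the characteristic directions $\varphi^\ast \in \Omega_{pq}$ only at positive radius, where $(\varphi^\ast, \rho) \notin \Theta^{-1}(0)$ and $V$ is still $C^1$ despite the Laurent coefficients $v_i$ being undefined on $\Omega_{pq}$. Ensuring this is exactly what the restriction to $\Lambda \backslash \Lambda_{pq}$ buys us, and it is the feature that separates the present polycycle setting from the periodic-orbit framework of \cite{GaGiGr}; everything downstream (the computation of $W'$ and the variational equation) is then standard.
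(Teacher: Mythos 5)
Your proof is correct and follows essentially the same route as the paper, which likewise evaluates the defining relation \eqref{EDP-V} along the flow $\Phi(\varphi;\rho_0)$ and combines it with the first variational equation, then invokes the $2\pi$-periodicity of $V$ in $\varphi$ (the paper splits this into Propositions \ref{Teorema-main-V-Pi} and \ref{prop-V-Piprime} via the common quantity $\mathcal{I}(\rho_0) = \exp$-integrated on both sides, whereas you package the same computation as the constancy of $W/u$). You also correctly identify that the restriction to $\Lambda \backslash \Lambda_{pq}$ makes $\Theta^{-1}(0)\backslash\{\rho=0\}=\emptyset$, so the orbit arc stays where $\mathcal{F}$ and $V$ are $C^1$ and the principal-value machinery of the paper's more general Proposition \ref{Teorema-main-V-Pi} is unnecessary, exactly as the paper notes in the proof of Proposition \ref{prop-V-Piprime}.
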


In the proof of Theorem \ref{lemma-fundamental} we will use some results of the (two-dimensional) power geometry developed by Bruno in \cite{Bru, Br} that is designed to classify asymptotic expansions (including Puiseux series as particular case) of invariant branches at singularities of analytic planar vector fields. We summarize these results in the subsection \ref{Sec-IB2} of the Appendix.

It is nice to remark that in the proof of the following theorem we re-prove, under the conditions of the theorem, the well-known fact that the Poincar\'e map $\Pi$ has an asymptotic expansion of the form $\Pi(\rho) = \eta_1 \rho + o(\rho)$.

\begin{theorem} \label{lemma-fundamental}
Let the origin be a monodromic singularity of family \eqref{DCC-1} with parameters in $\Lambda$ and $0 \not\in \Omega_{pq}$. Assume that equation {\rm (\ref{eq3})} has a Laurent inverse integrating factor $V(\varphi, \rho)$ of the form  \eqref{V-Cw-def} with multiplicity $m$. Then the Poincar\'e map $\Pi$ of the singularity at the origin of the family restricted to the parameter space $\Lambda \backslash \Lambda_{pq}$ has a formal power series expansion $\Pi(\rho) = \sum_{i \geq 1} \eta_i \rho^i$. Moreover $\Pi$ has the following structure:
\begin{itemize}
  \item[(i)] If $m \leq 0$ then the origin is a center.

  \item[(ii)] If $m > 1$ then $\Pi(\rho) = \rho + \eta_m \rho^m (1+ O(\rho))$. In particular, if $\eta_m = 0$ then the origin is a center.

  \item[(iii)] If $m=1$ and $\eta_1 = 1$ then the origin is a center.
\end{itemize}
\end{theorem}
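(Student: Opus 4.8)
The plan is to read the fundamental equation of Theorem~\ref{teo-Fund-VPi} as a functional equation for $\Pi$. Setting $F(\rho):=V(0,\rho)=\sum_{i\ge m}v_i(0)\rho^i$, relation \eqref{fundamental} becomes $F(\Pi(\rho))=F(\rho)\,\Pi'(\rho)$. Since $0\notin\Omega_{pq}$, the section $\{\varphi=0\}$ avoids the characteristic directions, so every $v_i(0)$ is finite and $F$ is a genuine Laurent series; moreover $F\not\equiv0$, because integrating the transport equation \eqref{EDP-V} along the characteristics shows that $V$ vanishing on the whole section $\{\varphi=0\}$ would force $V\equiv0$, contradicting \eqref{V-Cw-def}. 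I will use that the leading exponent of $F$ equals the multiplicity $m$, that is $a_m:=v_m(0)\neq0$; this is the single point where $0\notin\Omega_{pq}$ must be combined with the explicit form \eqref{V-polar-def} of Remark~\ref{rem-v-V}, and it is a delicate point to which I return at the end.

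First I would fix the linear coefficient and rule out logarithms in one stroke. By Medvedeva's expansion \eqref{Poinc-expan}, $\Pi$ admits a Dulac series whose coefficients are a priori polynomials in $\log\rho$. Substituting $\Pi=\eta_1\rho+\cdots$ into $F(\Pi)=F\,\Pi'$ and expanding $F(\Pi)=F(\eta_1\rho)+F'(\eta_1\rho)\,w+\tfrac12F''(\eta_1\rho)\,w^2+\cdots$, the lowest order yields $\eta_1^{\,m-1}=1$, hence $\eta_1=1$ whenever $m\neq1$ (recall $\eta_1>0$); for $m=1$ with $\eta_1\neq1$ the origin is a hyperbolic fixed point and the absence of logarithms is classical, so I concentrate on the resonant case $\eta_1=1$. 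Let then $w(\rho)=P(\log\rho)\,\rho^{\nu}$, with $\nu>1$ and $P$ a polynomial, be the leading term of $\Pi(\rho)-\rho$. Collecting in $F(\Pi)=F\,\Pi'$ the terms of order $\rho^{m+\nu-1}$ produces the relation
\begin{equation*}
(m-\nu)\,P(\log\rho)-P'(\log\rho)=0,
\end{equation*}
because the nonlinear remainder $\tfrac12F''(\rho)w(\rho)^2+\cdots$ is of order $\rho^{m+2\nu-2}$, strictly higher since $\nu>1$. A nonzero polynomial can satisfy $P'=(m-\nu)P$ only if $m-\nu=0$ and $P$ is constant; hence $\nu=m$, no logarithm survives, and the first nonidentity term is a constant multiple $\eta_m\rho^m$. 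Iterating this computation term by term along the expansion forces every exponent to be a positive integer and every coefficient to be a constant, giving $\Pi(\rho)=\sum_{i\ge1}\eta_i\rho^i$.

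The three structural items now fall out of the same dichotomy, namely that the first index $k_0\ge2$ with $\eta_{k_0}\neq0$ (if it exists) must satisfy $k_0=m$. For item (i), $m\le0$ gives $k_0=m<2$, which is impossible, so no nonidentity term occurs and $\Pi=\mathrm{id}$, a center. For item (iii), $m=1$ together with $\eta_1=1$ gives $k_0=1<2$, again impossible, so $\Pi=\mathrm{id}$. For item (ii), $m>1$ yields $k_0=m$, that is $\Pi(\rho)=\rho+\eta_m\rho^m(1+O(\rho))$; and if $\eta_m=0$ the same dichotomy forbids any nonidentity term, whence $\Pi=\mathrm{id}$ is a center.

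I expect the main obstacle to be the elimination of logarithms, i.e. turning the single leading-order computation of the second paragraph into a rigorous induction: one must simultaneously control the orders of all the remainders $F^{(k)}w^k$ and verify that no $\log$-monomial can be generated at any later stage of the Dulac series, not merely at the first. A secondary technical point is the identification in the first paragraph of the leading exponent of $F$ with the multiplicity $m$, that is $v_m(0)\neq0$, which rests on $0\notin\Omega_{pq}$ and on \eqref{V-polar-def}. For the center conclusions an alternative and transparent route is to set $H(\rho)=\int d\rho/F(\rho)$, so that \eqref{fundamental} integrates to $H(\Pi(\rho))=H(\rho)+c$; since $H'(\rho)=1/F(\rho)\neq0$ for small $\rho>0$ the map $H$ is strictly monotone, hence injective, and whenever $\eta_1=1$ one checks that $c=\lim_{\rho\to0^+}\bigl(H(\Pi(\rho))-H(\rho)\bigr)=0$, so $H(\Pi(\rho))=H(\rho)$ immediately forces $\Pi=\mathrm{id}$.
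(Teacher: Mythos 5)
Your proposal is correct in substance and reaches the theorem by a genuinely different route. Both arguments hinge on reading the fundamental equation \eqref{fundamental} as the functional equation $F(\Pi)=F\,\Pi'$ with $F(\rho)=V(0,\rho)$, but the machinery differs. The paper first asserts, by an unexpanded ``tedious but straight computation,'' that no logarithms occur in \eqref{Poinc-expan}; it then invokes Bruno's power geometry: the Newton diagram of $\mathcal{Y}=V(0,\rho)\partial_\rho+V(0,\Pi)\partial_\Pi$ gives the linear leading term, the Fuchs index $j=m-1\notin\mathbb{Q}^+\setminus\mathbb{N}$ (Remark \ref{Demina-remark}) excludes fractional powers, the determining polynomial $\mathcal{Q}(\eta)=\eta(1-\eta^{m-1})$ of \eqref{det-pol-2} forces $\eta_1=1$ when $m>1$, and the explicit recursions \eqref{iteration-eta}--\eqref{system-eta-v} yield $\eta_2=\cdots=\eta_{m-1}=0$ and $\ell\,\eta_{m+\ell}=\eta_m(**)$, whence (ii) and (iii). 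Your single balance $(m-\nu)P=P'$ on the leading deviation $w=P(\log\rho)\rho^{\nu}$ of $\Pi-\rho$ compresses all of this into one computation: since a nonzero polynomial cannot satisfy $P'=cP$ with $c\neq0$, it simultaneously kills the logarithms, pins the exponent $\nu=m$, and delivers the dichotomy $k_0=m$ that yields (i), (ii) and (iii) uniformly --- including case (i), which the paper does not reprove but outsources to \cite{GaGi2}. What you flag as the ``main obstacle'' (turning the leading-order balance into an induction over the well-ordered set of Dulac exponents, controlling the remainders $F^{(k)}w^k$ and the cross terms such as $F''\cdot\eta_m\rho^m\cdot w$, which for $m>1$ sit at order $\rho^{2m-2+\nu}>\rho^{m+\nu-1}$) is at essentially the same level of incompleteness as the paper's own unexpanded no-log computation; likewise, the delicate point $v_m(0)\neq0$ that you rightly isolate is settled in the paper only by citing Lemma 16 of \cite{GaGi2}, which places the zeros of $v_m$ inside $\Omega_{pq}$.

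One genuine imprecision to fix: in your closing alternative route via $H(\rho)=\int d\rho/F(\rho)$ you assert that $c=\lim_{\rho\to0^+}\bigl(H(\Pi(\rho))-H(\rho)\bigr)=0$ ``whenever $\eta_1=1$.'' For $m>1$ this is false as stated: by Theorem \ref{lemma-fundamental-2} this constant is exactly $\eta_m$ (and $\log\eta_1$ when $m=1$), so $\eta_1=1$ alone does not give $c=0$. The claim is salvageable, and in fact valuable, in the following form: once your series argument shows that the Dulac expansion of $\Pi-\rho$ is trivial, one has $\Pi(\rho)-\rho=o(\rho^N)$ for every $N$, hence $|H(\Pi(\rho))-H(\rho)|\lesssim \rho^{-m}\,o(\rho^N)\to0$ and $c=0$; since $1/F$ has fixed sign near $0^+$, $H$ is strictly monotone and $H(\Pi(\rho))=H(\rho)$ forces $\Pi=\mathrm{id}$ exactly. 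Used this way, your $H$-route makes explicit the passage from ``all formal coefficients vanish'' to ``the origin is a center,'' a step the paper leaves implicit in (ii) and treats only in the analytic settings of Theorem \ref{cor-fundamental}.
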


\begin{remark}
{\rm Statement (i) of Theorem \ref{lemma-fundamental} is included for the sake of completeness and it was proved in \cite{GaGi2} for parameters in $\Lambda$ and not only in the more restrictive subset $\Lambda \backslash \Lambda_{pq}$ as it is stated here. On the other hand, we also note that condition $0 \not\in \Omega_{pq}$ of Theorem \ref{lemma-fundamental} is not restrictive because if $0 \in \Omega_{pq}$ then after a rotation of coordinates (or just by the linear change that interchanges the coordinates $x$ and $y$) we get $0 \not\in \Omega_{pq}$. Recall that a change of coordinates may modify the Newton diagram of the vector field.}
\end{remark}

\begin{lemma}\label{structure-main-2-new}
Let $V(\varphi, \rho)$ be a Puiseux inverse integrating factor of \eqref{eq3}. If the origin is a monodromic singularity of system \eqref{DCC-1}, then $V^{-1}(0) \backslash \{\rho=0\} = \emptyset$.
\end{lemma}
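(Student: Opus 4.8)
The plan is to use the classical fact that the zero-set of an inverse integrating factor is flow-invariant, and then to combine monodromicity with the convergence of the Puiseux expansion \eqref{V-Puiseux-cil} to prevent this invariant set from reaching $\{\rho=0\}$. First I would prove the invariance of $V^{-1}(0)$: along any solution $\rho=\rho(\varphi)$ of \eqref{eq3}, differentiating $V(\varphi,\rho(\varphi))$ and substituting \eqref{EDP-V} yields the linear equation $\frac{d}{d\varphi}V(\varphi,\rho(\varphi)) = \frac{\partial \mathcal F}{\partial\rho}(\varphi,\rho(\varphi))\,V(\varphi,\rho(\varphi))$, whence $V$ vanishes either identically on the orbit or nowhere on it. Thus $V^{-1}(0)$ is a union of orbits of \eqref{eq3}, and since every such orbit lies in the domain $C\setminus\{\Theta^{-1}(0)\cup\{\rho=0\}\}$ where $\Theta\neq 0$, the angle $\varphi$ is strictly monotone along it. Because $v_m\not\equiv 0$ has isolated zeros and $\Omega_{pq}$ is finite on $\mathbb S^1$, after the rotation allowed by the remark following Theorem \ref{lemma-fundamental} I may assume $0\notin\Omega_{pq}$ and $v_m(0)\neq 0$, so that $V(0,\rho)=v_m(0)\rho^{m/n}(1+o(1))$ has nonzero leading term and only isolated zeros, all bounded away from $\rho=0$.

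Suppose, for contradiction, that there is a point $(\varphi_0,\rho_0)\in V^{-1}(0)$ with $\rho_0>0$, and let $\gamma$ be the orbit through it. I would split according to whether $\gamma$ meets the section $\Sigma=\{\varphi=0\}$. If it does, say first at radius $s_0>0$, then $V(0,s_0)=0$; integrating the invariance relation over one turn (which reproduces the fundamental equation \eqref{fundamental}) gives $V(0,s_1)=V(0,s_0)\exp\!\big(\oint \partial_\rho\mathcal F\,d\varphi\big)=0$ for the next return radius $s_1$, and iterating yields a sequence of return radii on $\Sigma$ all annihilating $V(0,\cdot)$. When the origin is not a center these radii are strictly monotone and tend to $0$, producing zeros of the convergent series $V(0,\cdot)$ accumulating at $\rho=0$ and contradicting $v_m(0)\neq 0$. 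When the origin is a center the return map is the identity, $\gamma$ is periodic and meets $\Sigma$ at the single radius $s_0$, an isolated zero of $V(0,\cdot)$ bounded below by some $\delta_0>0$; shrinking the cylinder to $\{\rho<\delta_0\}$ removes it. In either subcase no such zero survives in a sufficiently small $C$.

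It remains to treat the orbits $\gamma\subset V^{-1}(0)$ that never cross $\Sigma$. Since $\varphi$ is monotone along $\gamma$ and $\gamma$ stays in the monodromic cylinder, such an orbit must tend to $\{\rho=0\}$ as $\varphi\to\varphi^*$ for some characteristic direction $\varphi^*\in\Omega_{pq}$; that is, $\gamma$ approaches the corner $(\varphi^*,0)$ of the polycycle. Pulling $\gamma$ back through the weighted polar blow-up produces an invariant branch of $\mathcal X$ reaching the origin tangent to the direction fixed by $\varphi^*$, i.e. a \emph{characteristic orbit} of the analytic family \eqref{DCC-1}. Here I would invoke Bruno's power geometry \cite{Bru,Br} to guarantee that this invariant set really is a genuine branch carrying an asymptotic expansion, so that it constitutes a bona fide characteristic orbit. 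But monodromicity of the origin means precisely that $\mathcal X$ has no characteristic orbits, a contradiction. Combining the two cases gives $V^{-1}(0)\setminus\{\rho=0\}=\emptyset$.

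The main obstacle is this last step: ruling out invariant zero-orbits that escape to the corners of the polycycle. Away from the characteristic directions the convergent leading term $v_m(\varphi)\rho^{m/n}$ does the work, and the fundamental equation disposes cleanly of the spiralling (focus) case; but near a corner $\varphi^*\in\Omega_{pq}$ the coefficients $v_i(\varphi)$ may degenerate and $V$ loses uniform control, so one genuinely needs the Newton-diagram and asymptotic machinery of power geometry to certify that an orbit accumulating at $(\varphi^*,0)$ is an honest characteristic orbit and thereby contradicts the monodromy hypothesis. A secondary technical point to handle with care is the behaviour of $\gamma$ relative to the set $\Theta^{-1}(0)\setminus\{\rho=0\}$ when $\lambda\in\Lambda_{pq}$, which must be shown not to trap $\gamma$ at a positive radius in a way that survives the shrinking of $C$.
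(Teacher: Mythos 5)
Your overall mechanism is the same as the paper's: flow-invariance of $V^{-1}(0)$, monodromy forcing any zero at positive radius to generate zeros of $V$ along a fixed ray accumulating at $\rho=0$, and the convergent Puiseux expansion with $v_m\not\equiv 0$ forbidding such accumulation. The paper implements this more economically: it reduces to a Laurent factor $\tilde V$ via $\sigma^n=\rho$ (Lemma \ref{V->puiseux-laurent}), invokes the polycycle dichotomy (either $V^{-1}(0)$ contains a sequence of periodic orbits accumulating at $\{\rho=0\}$, the center case, or it has $\{\rho=0\}$ as limit set, the focus case), and observes that $f(\rho)=\rho^{k}\tilde V(\hat\varphi,\rho)$ is analytic at $\rho=0$ along a fixed line $\hat\varphi\notin\Omega_{pq}$, hence cannot have zeros accumulating at the origin.

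There are two genuine problems with your route as written. First, you propagate the zero around one turn by ``integrating the invariance relation, which reproduces the fundamental equation \eqref{fundamental}''. But that equation is established in the paper only on $\Lambda\setminus\Lambda_{pq}$ (Theorem \ref{teo-Fund-VPi}), whereas Lemma \ref{structure-main-2-new} carries no parameter restriction; worse, Proposition \ref{Teorema-main-V-Pi} explicitly assumes $V(0,\rho_0)\neq 0$ --- precisely the hypothesis failing at your point $s_0$ --- and the existence of the exponent $\mathcal{I}(\rho_0)$ (a principal-value integral) is \emph{derived} from that nonvanishing, so the identity $V(0,s_1)=V(0,s_0)\exp\bigl(\mathcal{I}(s_0)\bigr)$ is not available at a zero. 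The repair is simply to drop \eqref{fundamental} and use only your first-paragraph linear-ODE uniqueness along the orbit, which is exactly what the paper does (stating invariance of $V^{-1}(0)$ in $\hat C=C\setminus\{\{\rho=0\}\cup\Theta^{-1}(0)\cup\Delta^*\}$); the behaviour at the crossings of $\Theta^{-1}(0)$ that you defer as a ``secondary technical point'' is then the only delicate issue, and it is the same one the paper's proof absorbs into that invariance statement. Second, your final case --- orbits in $V^{-1}(0)$ never meeting $\Sigma$ --- is vacuous in a sufficiently small cylinder and the Bruno machinery you invoke there is misplaced: monodromy makes $\{\rho=0\}$ a periodic orbit or polycycle with all nearby orbits winding around the cylinder, and an orbit limiting to a corner $(\varphi^*,0)$ without winding corresponds, already by definition, to an orbit of $\mathcal{X}$ approaching the origin in a fixed direction, i.e.\ a characteristic orbit, contradicting monodromy with no need to certify an asymptotic expansion (a trajectory accumulating at a corner need not carry a Puiseux expansion anyway; the paper uses power geometry for the invariant branches of equation \eqref{fundamental} in the proof of Theorem \ref{lemma-fundamental}, not in this lemma).
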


Taking $C = I \times \mathbb{S}^1$ the cylinder \eqref{cylinder} with $I = \{0\} \cup I^+$ and $I^+$ is a sufficiently small positive half-neighborhood of the origin, the function $G : I^+ \to \mathbb{R}$ given by
\begin{equation}\label{Function-G}
G(r) = \int_0^{2 \pi} \frac{\mathcal{F}(\varphi, r)}{V(\varphi, r)} d \varphi
\end{equation}
is well defined in the following two cases: (i) Family \eqref{DCC-1} is restricted to $\Lambda \backslash \Lambda_{pq}$; (ii) There is a generalized Darboux inverse integrating factor of \eqref{DCC-1} of the form $v(x,y) = \prod_i f_i^{\lambda_i}(x,y)$ where $f_i$ are analytic functions in a neighborhood of the origin and $\lambda_i \in \mathbb{Q}$.

\begin{theorem} \label{lemma-fundamental-2}
Under the conditions of Theorem \ref{lemma-fundamental}, $G(r) = \mathfrak{g} \in \mathbb{R}$ is a constant in $I^+$ and $\log\eta_1 = \mathfrak{g}$ when $m=1$ and $\eta_m = \mathfrak{g}$ if $m > 1$.
\end{theorem}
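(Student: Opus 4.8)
The plan is to prove the two assertions in turn: first that $G(r)$ is independent of $r$, and then to evaluate the constant $\mathfrak{g}$ in terms of the leading Poincar\'e--Lyapunov coefficient. For the constancy of $G$ I would differentiate \eqref{Function-G} under the integral sign and use the defining equation \eqref{EDP-V}. A one-line manipulation of \eqref{EDP-V} shows that
\[
\frac{\partial}{\partial\rho}\!\left(\frac{\mathcal{F}}{V}\right) \;=\; \frac{\partial_\varphi V}{V^2} \;=\; -\,\frac{\partial}{\partial\varphi}\!\left(\frac{1}{V}\right),
\]
valid throughout the region where $\mathcal{F}$ is regular and $V\neq 0$. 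This region is exactly $\{0<\rho\ll 1\}$ once we restrict to $\Lambda\setminus\Lambda_{pq}$: there $\Theta$ has no zeros off $\{\rho=0\}$ by \eqref{critical-L}, so $\mathcal{F}=R/\Theta$ is well defined, and $V$ has no zeros off $\{\rho=0\}$ by Lemma~\ref{structure-main-2-new}. Integrating the displayed identity in $\varphi$ over $[0,2\pi]$ and invoking the periodicity $V(2\pi,r)=V(0,r)$ makes the right-hand side telescope to $0$, so $G'(r)\equiv 0$ and $G(r)=\mathfrak{g}$ is a constant.

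The heart of the matter is to relate $\mathfrak{g}$ to $\Pi$, and here the key idea is that the $1$-form $\omega=(d\rho-\mathcal{F}\,d\varphi)/V$ is closed (its closedness is precisely \eqref{EDP-V}) on the simply connected strip $S=[0,2\pi]\times(0,\varepsilon)$, hence exact, $\omega=dF$. I would integrate $\omega$ along two paths joining $A=(0,\rho_0)$ to $B=(2\pi,\Pi(\rho_0))$. Along the orbit $\varphi\mapsto(\varphi,\Phi(\varphi;\rho_0))$ one has $d\rho=\mathcal{F}\,d\varphi$, so $\omega$ vanishes identically and $\int\omega=0$. Along the ``L-shaped'' path that runs horizontally at $\rho=\rho_0$ and then vertically at $\varphi=2\pi$, the horizontal leg contributes $-G(\rho_0)=-\mathfrak{g}$ and the vertical leg contributes $\int_{\rho_0}^{\Pi(\rho_0)}d\rho/V(2\pi,\rho)=\int_{\rho_0}^{\Pi(\rho_0)}d\rho/V(0,\rho)$, again by periodicity. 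Since $F$ is single valued on $S$ and the two paths share their endpoints, the integrals agree, yielding the exact identity
\[
\mathfrak{g} \;=\; \int_{\rho_0}^{\Pi(\rho_0)} \frac{d\rho}{V(0,\rho)} ,
\]
valid for every sufficiently small $\rho_0>0$.

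Finally I would extract the value of $\mathfrak{g}$ by letting $\rho_0\to 0^+$, which is legitimate precisely because the left-hand side does not depend on $\rho_0$. Writing $V(0,\rho)=v_m(0)\,\rho^m(1+O(\rho))$ with $v_m(0)\neq 0$ (which holds under the running hypotheses since $0\notin\Omega_{pq}$) and adopting the harmless normalization $v_m(0)=1$, the two cases of Theorem~\ref{lemma-fundamental} supply the needed asymptotics of $\Pi$. When $m=1$ one has $\Pi(\rho_0)=\eta_1\rho_0(1+O(\rho_0))$, so $\int_{\rho_0}^{\Pi(\rho_0)}d\rho/V(0,\rho)=\log\!\big(\Pi(\rho_0)/\rho_0\big)+O(\rho_0)\to\log\eta_1$. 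When $m>1$ one has $\Pi(\rho_0)=\rho_0+\eta_m\rho_0^m(1+O(\rho_0))$, and the elementary estimate $\tfrac{1}{1-m}\big[\Pi(\rho_0)^{1-m}-\rho_0^{1-m}\big]\to\eta_m$ (the $O(\rho)$ corrections in $1/V(0,\rho)$ contributing $o(1)$) gives $\mathfrak{g}=\eta_m$.

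The step I expect to be the main obstacle is the bridge between the fixed-radius integral $G$ and the trajectory data encoded in $\Pi$: recognizing that a single closed $1$-form simultaneously computes $G$ along the horizontal circles and vanishes along orbits is what forces the integral identity for $\mathfrak{g}$. It is exactly here that the restriction to $\Lambda\setminus\Lambda_{pq}$ (which guarantees that $\omega$ is defined and closed on a genuine annular strip free of the critical set $\Theta^{-1}(0)\setminus\{\rho=0\}$) and the nonvanishing of the leading coefficient $v_m(0)$ must be used with care; the subsequent passage to the limit and the identification of the cases are then routine asymptotics.
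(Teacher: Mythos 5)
Your proof is correct, and it differs from the paper's in an interesting way. The paper does not prove the identity $G(r)=\int_r^{\Pi(r)} d\rho/V(0,\rho)$ (its equation \eqref{def-G}): it imports it from \cite{Ga-Gi}, and then obtains constancy of $G$ by applying the Leibniz rule to \eqref{def-G} and invoking the fundamental relation \eqref{fundamental} of Theorem \ref{teo-Fund-VPi}, which gives $G'(r)=\Pi'(r)/V(0,\Pi(r))-1/V(0,r)\equiv 0$. You instead re-derive \eqref{def-G} from scratch: your computation $\partial_\rho(\mathcal{F}/V)=\partial_\varphi V/V^2=-\partial_\varphi(1/V)$ is exactly the PDE \eqref{EDP-V} rearranged, it yields constancy of $G$ directly from \eqref{Function-G} by differentiation under the integral sign plus $2\pi$-periodicity, and the same identity is the closedness of $\omega=(d\rho-\mathcal{F}\,d\varphi)/V$, so your path-comparison between the orbit and the L-shaped path produces \eqref{def-G} without ever using Theorem \ref{teo-Fund-VPi} (indeed, differentiating your identity $\mathfrak{g}=\int_{\rho_0}^{\Pi(\rho_0)}d\rho/V(0,\rho)$ re-proves \eqref{fundamental}, so your argument is strictly more self-contained, at the cost of the justifications you correctly flag: $V\neq 0$ off $\{\rho=0\}$ from Lemma \ref{structure-main-2-new}, regularity of $\mathcal{F}$ on $\Lambda\setminus\Lambda_{pq}$, and $v_m(0)\neq 0$ from $0\notin\Omega_{pq}$). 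The final limit computations coincide with the paper's: for $m=1$ both get $\log(\Pi(r)/r)\to\log\eta_1$, and for $m>1$ the paper carries out pole-by-pole estimates (the terms $\Delta_k(r)$ from $\beta_i/\rho^i$ with $i<m$, and $\beta_1\log(\Pi(r)/r)\to 0$ since $\eta_1=1$) that you compress into ``the corrections contribute $o(1)$'' --- a legitimate shortcut, since each such term is $O(r^{m-i})$ or $O(r^{m-1})$. In short, the paper's proof is shorter because \eqref{def-G} and \eqref{fundamental} are already available; yours buys independence from the external citation and from Theorem \ref{teo-Fund-VPi}.
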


\begin{remark}
{\rm Taking a look at the second part of the proof of Theorem \ref{lemma-fundamental-2} we infer that Theorems \ref{teo-Fund-VPi}, \ref{lemma-fundamental} and \ref{lemma-fundamental-2} can be generalized changing the condition of the restriction to $\Lambda \backslash \Lambda_{pq}$ to the condition that $G$ is well defined in $I^+$ and it is a constant. Using this idea we see, for example, that the restriction $\Lambda \backslash \Lambda_{pq}$ is not necessary in Proposition 22 of \cite{GaGi2}. }
\end{remark}

\begin{remark}
{\rm Although $\mathfrak{g} = \lim_{r \to 0^+} G(r)$ we remark that when $\Omega_{pq} \neq \emptyset$ then, in general,
$$
\lim_{r \to 0^+} G(r) \neq \int_0^{2 \pi} \lim_{r \to 0^+} \frac{\mathcal{F}(\varphi, r)}{V(\varphi, r)} d \varphi.
$$
We refer the reader to Example 2 in \cite{GaGi2} where $G(r)$ is computed and these phenomena appears. We also emphasize that, in general, $G$ cannot be continuously extended to the origin. The reason is that, using \eqref{Function-G}, and the expansions \eqref{V-Cw-def} and $\mathcal{F}(\varphi, r) = \mathcal{F}_1(\varphi) r + O(r^2)$, we see that $G(0)$ does not exist when $m> 1$ whereas for $m=1$ one gets $G(0) = \xi_{pq}$ where
\begin{equation}\label{xi-pq}
\xi_{pq} = PV \int_0^{2 \pi} \mathcal{F}_1(\varphi) d \varphi,
\end{equation}
which may exists or not and even when exists it can be different from $\mathfrak{g}$. }
\end{remark}

\subsection{The Bautin ideal and the cyclicity of the monodromic singularity}

Let $E \subset \Lambda$ be an open subset of the monodromic parameter space $\Lambda$ such that the rectricted family $\mathcal{X}|_E$ satisfies that its Newton diagram $\mathbf{N}(\mathcal{X}|_E)$ is fixed. For any $\lambda^* \in E$ we consider the family of vector fields for $(x,y, \lambda)$ on an open neighborhood of $((0, 0), \lambda^*)$ in $\mathbb{R}^2 \times E$. Then for $|\rho|$ and $\|\lambda - \lambda^* \|$ sufficiently small, $\Pi$ can be expressed as a formal power series $\Pi(\rho; \lambda) = \sum_{i \ge 1} \eta_i(\lambda) \rho^i$, see Theorem \ref{lemma-fundamental}. The generalized Poincar\'e-Lyapunov quantities $\eta_i(\lambda)$ are analytic on a neighborhood of $\lambda^*$. This assertion was proved by Medvedeva \cite{Me} even when the Poincar\'e map has a Dulac asymptotic expansion. In the forthcoming example \eqref{Ejemplo-V-foco-ciclo} we show that the $\eta_i(\lambda)$ can be even discontinuous at $\lambda = \hat\lambda \in \partial E$, the boundary of $E$.

We identify each $\eta_i(\lambda)$ as an element of the ring $\mathcal{G}_{\lambda^*}$ of germs of analytic functions at $\lambda^*$.
Since $\mathcal{G}_{\lambda^*}$ is a noetherian ring, the {\it Bautin ideal} defined as $\mathcal{B} = \langle \eta_1-1, \eta_2, \eta_3, \ldots, \rangle$ so generated by $\eta_1-1$ and all the $\eta_j$ with $j \geq 2$ is finitely generated.

The Bautin ideal has a narrow relationship with the \emph{cyclicity} of the monodromic singularity at the origin, that is, with the maximum number of small-amplitude limit cycles that can be made to bifurcate from it under small perturbation of relevant parameters in $E$.

Theorem \ref{lemma-fundamental} provides us with a structure of the Bautin ideal and the next result use it to compute the cyclicity.

\begin{theorem} \label{cor-fundamental}
Under the assumptions of Theorem \ref{lemma-fundamental} and restricting the family to $\mathcal{X}|_E$ where $E \subset \Lambda$ is such that the Newton diagram $\mathbf{N}(\mathcal{X}|_E)$ is fixed, the Bautin ideal $\mathcal{B}$ is principal and given by $\mathcal{B} = \langle \eta_1(\lambda)-1 \rangle$ if $m=1$ or $\mathcal{B} = \langle \eta_m(\lambda) \rangle$ when $m \geq 2$. Moreover, the following holds:
\begin{itemize}
  \item[(i)] If $m>1$ and the residue ${\rm Res}(1/V(0, \rho), 0) = 0$ then $\Pi(\rho)$ is analytic at $\rho=0$.
  \item[(ii)] When $\Pi$ is analytic at $\rho=0$ then the cyclicity of the origin, with respect to perturbation within the family $\mathcal{X}|_E$, is 0.
  \item[(iii)] If $m=1$ then $\Pi(\rho)$ is analytic.
\end{itemize}
\end{theorem}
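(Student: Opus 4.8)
The plan is to run everything off the single fundamental relation $V(0,\Pi(\rho)) = V(0,\rho)\,\Pi'(\rho)$ of Theorem \ref{teo-Fund-VPi}, which after dividing reads $\Pi'(\rho)/V(0,\Pi(\rho)) = 1/V(0,\rho)$. First I would introduce the Laurent antiderivative $W(\rho) = \int^{\rho} ds/V(0,s)$ of $1/V(0,s)$ and observe that the rewritten relation is exactly $\tfrac{d}{d\rho}W(\Pi(\rho)) = \tfrac{d}{d\rho}W(\rho)$, whence $W(\Pi(\rho)) = W(\rho) + \mathfrak{c}$ for a constant $\mathfrak{c}$: the Poincar\'e map is conjugated to the translation by $\mathfrak{c}$ through $W$. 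Everything then reduces to controlling $W$, whose only non-power (logarithmic) contribution comes from the $s^{-1}$ coefficient of $1/V(0,s)$, that is, from ${\rm Res}(1/V(0,\rho),0)$. Here I use that the leading coefficient $v_m(0;\lambda^*)\neq 0$ (guaranteed by $0\notin\Omega_{pq}$ in Theorem \ref{lemma-fundamental}), so $1/V(0,s) = s^{-m}(v_m(0)^{-1}+\cdots)$ is a genuine Laurent series.

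For statement (iii), with $m=1$ one has $V(0,\rho) = v_1(0)\rho(1+O(\rho))$, so ${\rm Res}(1/V(0,\rho),0) = 1/v_1(0)\neq 0$ and $W(\rho) = v_1(0)^{-1}\log\rho + A(\rho)$ with $A$ analytic. The conjugacy then gives $\Pi = \rho\,\exp\big(v_1(0)(\mathfrak{c}+A(\rho)-A(\Pi))\big)$, and applying the analytic implicit function theorem to $H(\rho,\Pi) := \Pi - \rho\exp(v_1(0)(\mathfrak{c}+A(\rho)-A(\Pi)))$ at $(0,0)$, where $\partial_\Pi H(0,0)=1\neq 0$, shows that $\Pi$ is analytic. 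For statement (i), with $m>1$ and the residue vanishing there is no logarithmic term, so $W(\rho) = c_*\rho^{1-m}(1+O(\rho))$ with $c_*\neq 0$. I would introduce the analytic coordinate $\xi=\phi(\rho)$ defined by $\xi^{1-m}=W(\rho)$, noting $\phi(\rho)=(\mathrm{const})\rho+\cdots$ is an honest analytic change of variable; in it the conjugacy becomes $\tilde\Pi(\xi)^{1-m}=\xi^{1-m}+\mathfrak{c}$, so $\tilde\Pi(\xi)=\xi\,(1+\mathfrak{c}\,\xi^{m-1})^{1/(1-m)}$ is manifestly analytic, and pulling back by $\phi$ gives $\Pi$ analytic.

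For principality, since $\mathcal{G}_{\lambda^*}$ is noetherian it suffices to show $\eta_j\in\langle\zeta\rangle$ for finitely many $j$, where $\zeta=\eta_1-1$ if $m=1$ and $\zeta=\eta_m$ if $m\geq 2$; recall that for $m\geq 2$ Theorem \ref{lemma-fundamental}(ii) already forces $\eta_1-1=\eta_2=\cdots=\eta_{m-1}=0$ identically, so $\mathcal{B}=\langle\eta_m,\eta_{m+1},\dots\rangle$. I would argue by induction on $j$, comparing coefficients of like powers of $\rho$ in $V(0,\Pi(\rho))=V(0,\rho)\Pi'(\rho)$. Writing $V(0,\rho)=\sum_{i\geq m}c_i\rho^i$ with $c_m=v_m(0)\neq 0$, the appropriate power isolates $\eta_j$ with a nonzero prefactor (a nonzero integer multiple of $c_m$), while the remaining terms involve only $\eta_1,\dots,\eta_{j-1}$. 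Feeding in the inductive hypothesis and carrying out the explicit algebra exhibits the remainder as a multiple of $\zeta$ rather than merely as a function vanishing on $\{\zeta=0\}$: for instance when $m=1$ the remainder equals $c_j\,\eta_1(\eta_1^{\,j-1}-1)$ plus a term visibly divisible by $\eta_1-1$, and $\eta_1^{\,j-1}-1$ is itself divisible by $\eta_1-1$. Hence $\eta_j=(\text{analytic germ})\cdot\zeta$ and $\mathcal{B}=\langle\zeta\rangle$.

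Finally, for statement (ii), once $\Pi$ is analytic and $\mathcal{B}=\langle\zeta\rangle$, I would factor the displacement. Using the analyticity of $\Pi$ (in $(\rho,\lambda)$) together with $\eta_j=h_j\,\zeta$ for every index beyond the leading one, the Bautin--Roussarie division assembles the quotient into $\Pi(\rho)-\rho=\zeta(\lambda)\,\rho^{k}\,U(\rho;\lambda)$ with $k=1$ if $m=1$, $k=m$ if $m\geq 2$, and $U(0;\lambda^*)=1$ a unit. For $\lambda$ near $\lambda^*$ and $\rho>0$ small, $U$ does not vanish, so the displacement has no isolated positive zero unless $\zeta(\lambda)=0$, in which case the origin is a center; thus no limit cycle bifurcates and the cyclicity is $0$. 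The main obstacle is the principality induction together with this last factorization: one must keep the coefficient bookkeeping under tight enough control to display each remainder as an \emph{explicit} multiple of $\zeta$ (avoiding any Nullstellensatz argument), and one must justify that the formal quotient series converges to an analytic unit $U$, the delicate passage from coefficientwise divisibility by $\zeta(\lambda)$ to divisibility of the two-variable analytic germ.
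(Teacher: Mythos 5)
Your proposal is correct and its skeleton is the same as the paper's: everything is extracted from the fundamental equation \eqref{fundamental} by separating variables, and your primitive $W(\rho)=\int^{\rho}ds/V(0,s)$ is exactly the paper's $F$ (indeed $W(\Pi(r))-W(r)$ is the function $G$ of \eqref{def-G}, so your constant $\mathfrak{c}$ is the paper's $\mathfrak{g}$, equal to $\eta_m$ when $m>1$). The differences are in execution, and they are instructive. For (i) the paper keeps the first integral $H=F(\rho)-F(\Pi)$, multiplies by $(\rho\Pi)^{m-1}$ to obtain an analytic curve $\hat F=0$, and concludes by the Implicit Function Theorem when $m=2$ or by the determining polynomial $\mathcal{P}(\eta)=c(\eta^{m-1}-1)$ having $\eta=1$ as a simple root when $m>2$; you instead pass to an Abel-type coordinate in which $\Pi$ becomes the explicit map $\tilde\Pi(\xi)=\xi\,(1+\mathfrak{c}\,\xi^{m-1})^{1/(1-m)}$, which is equivalent and arguably more transparent, but needs one small repair: the leading coefficient of $W$ is $1/((1-m)v_m(0))<0$ (with $v_m(0)=1$), so your constant in $\phi(\rho)=(\mathrm{const})\rho+\cdots$ is non-real when $m$ is odd; normalize instead $\xi^{1-m}=(1-m)v_m(0)\,W(\rho)$, and when identifying $\Pi$ with the principal branch note that for $\xi,\tilde\Pi>0$ the relation $\tilde\Pi^{1-m}=\xi^{1-m}+\tilde{\mathfrak{c}}$ determines $\tilde\Pi$ uniquely because $t\mapsto t^{1-m}$ is injective on $(0,\infty)$. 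For (iii) the paper linearizes the star node $\mathcal{Y}=f(\rho)\partial_\rho+f(\Pi)\partial_\Pi$ and then applies the Implicit Function Theorem, whereas your log-coordinate identity $\Pi=\rho\exp\bigl(v_1(0)(\mathfrak{c}+A(\rho)-A(\Pi))\bigr)$ plus the Implicit Function Theorem reaches the same conclusion more elementarily, without invoking an analytic linearization theorem. Your coefficient induction for principality is precisely what the paper means by ``a direct consequence of Theorem \ref{lemma-fundamental}'': relation \eqref{relation_eta_m} already exhibits $\eta_{m+\ell}$ as an explicit analytic multiple of $\eta_m$, and for $m=1$ the divisibility of $\eta_1^{\,j-1}-1$ by $\eta_1-1$ is the right mechanism. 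Finally, the ``delicate passage'' you flag in (ii) — from coefficientwise divisibility $\eta_j=h_j\zeta$ to the factorization $\Pi(\rho;\lambda)-\rho=\zeta(\lambda)\rho^k U(\rho;\lambda)$ with $U$ an analytic unit, and thence to cyclicity $\#\mathcal{B}-1=0$ by a Rolle-type argument — is exactly what the paper outsources to Lemma 6.1.6 and Theorem 6.1.7 of \cite{RS}, so you may cite that machinery rather than re-prove it; with that citation your argument is complete.
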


\subsection{Non-continuous dependence of $\Pi$ with respect to the parameters when $\mathbf{N}(\mathcal{X})$ changes}
We emphasize that keeping the family in $E$ is necessary in Corollary \ref{cor-fundamental} as the following example shows. We consider the 1-parameter family
\begin{equation}\label{Ejemplo-V-foco-ciclo}
\dot{x} = (x - y)(x^2 + y^2) - \lambda (x + y), \ \ \dot{y} = (x + y)(x^2 + y^2) + \lambda (x - y).
\end{equation}
It has the inverse integrating factor $v(x,y; \lambda) = (x^2 + y^2) f(x,y; \lambda)$ where $f(x,y; \lambda) = x^2 + y^2 - \lambda$, that is, $\mathcal{X}/v$ is Hamiltonian in $\mathbb{R}^2 \backslash \{v^{-1}(0) \}$. When $\lambda=0$ it has a unstable degenerate (orbitally linearizable) focus at the origin. For $\lambda > 0$, the invariant circle $f(x,y; \lambda) = 0$ is a hyperbolic limit cycle of the system which bifurcates from the origin. In fact this is the unique limit cycle that family \eqref{Ejemplo-V-foco-ciclo} has in all $\mathbb{R}^2$ for all $\lambda \in \mathbb{R}$. When $\lambda \neq 0$, the system has a non-degenerate focus at the origin whose stability depends on the sign of $\lambda$. The weights associated to the Newton diagram are always $W(\mathbf{N}(\mathcal{X})) = \{ (1,1)\}$ althoug $\mathbf{N}(\mathcal{X})$ is not invariant in the family, it is formed by the edge with endpoints $(2,0)$ and $(0, 2)$ when $\lambda  > 0$ whereas the edge changes having now endpoints $(4, 0)$ and $(0, 4)$ when $\lambda=0$.

\subsubsection{Analysis when $\lambda > 0$:} In this case the polar system is $\dot{\rho} = \rho (-\lambda + \rho^2)$, $\dot{\varphi} = \lambda + \rho^2$ and its inverse integrating factor
$$
V(\varphi, \rho) = \frac{\rho (-\lambda + \rho^2)}{\lambda + \rho^2} =  -\rho + \frac{2}{\lambda} \rho^3 + O(\rho^4),
$$
which is of Laurent type with leading exponent $m=1$. The solution $\Pi^*(\rho)$ of the Cauchy problem obtained with the fundamental differential equation \eqref{fundamental} and the initial condition $\Pi(0)=0$ is not unique. For instance $\Pi^*(\rho) = 0$ and $\Pi^*(\rho) = \rho$ are two of such solutions. But, separating variables, we can integrate \eqref{fundamental} under the additional assumptions that the solution has a power expansion at $\rho=0$ with linear leading term and gives the closed-form expression of the analytic Poincar\'e map
$$
\Pi(\rho) = \frac{\rho^2 - \lambda + \sqrt{(\rho^2 - \lambda)^2 +
  4 \rho^2 \lambda \eta_1^2}}{2 \rho \eta_1} = \eta_1 \rho + \frac{\eta_1(1-\eta_1^2))}{\lambda} \rho^3 + O(\rho^5)
$$
having the structure predicted by Theorem \ref{lemma-fundamental}(iii). Notice that the only positive real fixed point $\rho^*$ of $\Pi$ is $\rho^* = \sqrt{\lambda}$.

We can easily compute the value of $\eta_1$ using Bautin method because $\Omega_{11} = \emptyset$. So inserting (with unknown coefficient functions of $\varphi$) an analytic flow $\Phi(\varphi; \rho_0)$ of $d \rho/ d \varphi = \rho (-\lambda + \rho^2) / (\lambda + \rho^2)$ with respect to the initial condition $\rho_0$ near $\rho_0=0$ and using that $\Pi(\rho_0) = \Phi(2 \pi; \rho_0)$ we get after some straightforward computations that $\eta_1 = \exp(-2 \pi) < 1$.

\subsubsection{Analysis when $\lambda = 0$:} Now the polar system is $\dot{\rho} = \rho$, $\dot{\varphi} = 1$ and its inverse integrating factor
$V(\varphi, \rho) = \rho$. From the general solution of the fundamental differential equations \eqref{fundamental} we obtain that the Poincar\'e map is the linear map $\Pi(\rho) = \eta_1 \rho$. In this case we also are able to compute $\eta_1$ because we easily obtain the flow $\Phi$ of the linear system $d \rho/ d \varphi = \rho$. We get $\Phi(\varphi; \rho_0) = \rho_0 \exp(\varphi)$ so that $\Pi(\rho_0) = \Phi(2 \pi; \rho_0) =  \exp(2 \pi) \rho$, hence $\eta_1 = \exp(2 \pi) > 1$.

\subsubsection{Consequences:} The monodromic parameter space is $\Lambda = \mathbb{R}$ and the origin of \eqref{Ejemplo-V-foco-ciclo} is always a focus that changes its stability at $\lambda = 0$ bifurcating one small-amplitude limit cycle. The vector field $\mathcal{X}$ and the associated differential equation \eqref{eq3} depend analytically with respect to the parameter $\lambda$ but the coefficients of the formal expansion of the Poincar\'e map do not, for example
$$
\eta_1(\lambda) = \left\{ \begin{array}{ccc}
                                \exp(-2 \pi) & \mbox{if} & \lambda > 0, \\
                                \exp(2 \pi) & \mbox{if} & \lambda = 0.
                              \end{array} \right.
$$

\subsection{Bautin's method with characteristic directions does not work even having a formal Poincar\'e map}

Let $\Phi(\varphi; \rho_0)$ be the flow associated to some weights $(p,q) \in W(\mathbf{N}(\mathcal{X}))$. It is important to remark that although $\Pi(\rho_0) = \Phi(2 \pi; \rho_0)$ has a formal expansion at $\rho_0=0$ under the assumptions of Theorem \ref{lemma-fundamental}, it is still possible that $\Phi(\varphi; \rho_0)$ may not have it for all $\varphi \in \mathbb{S}^1$ and consequently Bautin method cannot be used to compute $\eta_m$.

Indeed in general, as it is explained in the last section of \cite{GaGi3}, when $\Omega_{pq} \neq \emptyset$ the flow $\Phi(\varphi; \rho_0)$ cannot be expressed even like $\Phi(\varphi; \rho_0) = a(\varphi) \rho_0 + o(\rho_0)$ with $a(0)=1$ and $a(2 \pi) > 0$. In fact if $\Phi$ had a linear dominant term as in the previous expression then it must happen that $\Pi(\rho_0) = \eta_1 \rho_0 + o(\rho_0)$ with
$\eta_1 = a(2 \pi) = \exp(\xi_{pq})$, provided $\xi_{pq}$ defined in \eqref{xi-pq} exists, see \cite{GaGi3} again for a proof, and in particular $\xi_{pq}=0$ would be a necessary center condition.

To illustrate this behaviour we may take, as example, the family presented in Example 2 of \cite{GaGi2} given by
\begin{eqnarray}
\dot{x} &=& \lambda_1 (x^6 + 3 y^2) (-y + \mu x) + \lambda_2 (x^2+y^2)(y + A x^3), \nonumber \\
\dot{y} &=& \lambda_1 (x^6 + 3 y^2) (x + \mu y) + \lambda_2 (x^2+y^2)(-x^5 + 3 A x^2 y ), \label{ejemplo3-DCCD}
\end{eqnarray}
with parameter space $(\lambda_1, \lambda_2, \mu, A) \in \mathbb{R}^4$. The full family shares the inverse integrating factor $v(x,y) = (x^2+y^2)(x^6 + 3 y^2)$ and the origin is a monodromic singular point if and only if the parameters belong to the subset
\begin{equation}\label{monodromy2-Ejemplo3}
\Lambda = \{ (\lambda_1, \lambda_2, \mu, A) \in \mathbb{R}^4 : 3 \lambda_1 - \lambda_2 > 0, \ \ \lambda_1 - \lambda_2 > 0\}.
\end{equation}
Family \eqref{ejemplo3-DCCD} restricted to $\Lambda$ satisfies that $W(\mathbf{N}(\mathcal{X})) = \{ (1,1), (1,3) \}$. Let's choose the weights $(p, q) = (1,1)$, hence taking polar coordinates we see that $\Omega_{11} \neq \emptyset$ because the polar system becomes $\dot\rho = F_2(\varphi) \rho + O(\rho^2)$, $\dot\varphi = G_2(\varphi) +  O(\rho)$, where $F_2(\varphi) = \sin\varphi (\lambda_2 \cos\varphi +3 \mu \lambda_1 \sin\varphi)$ and $G_2(\varphi) = (3 \lambda_1-\lambda_2) \sin^2\varphi$. Then $V(\varphi, \rho) = v(\rho \cos\varphi, \rho \sin\varphi)/(\rho^3 \dot\varphi(\varphi, \rho))$ becomes a Puiseux inverse integrating factor with $m=1$. We can compute $\xi_{11}$ as follows
$$
\xi_{11} = PV \int_0^{2 \pi} \frac{F_2(\varphi)}{G_2(\varphi)} d \varphi = PV \int_0^{2 \pi} \frac{3 \lambda_1 \mu  + \lambda_2 \cot\varphi}{3 \lambda_1 - \lambda_2} d \varphi = \frac{6 \pi \lambda_1 \mu}{3 \lambda_1 - \lambda_2},
$$
and we check that the origin can be a center of \eqref{ejemplo3-DCCD} with $\xi_{11} \neq 0$ because in \cite{GaGi3} it is proved that $3 \lambda_1 \mu +  \sqrt{3} A \lambda_2 = 0$ is the unique center condition under some additional parameter restrictions which do not imply $A \lambda_2 = 0$. In summary $\eta_1 \neq \exp(\xi_{11})$ and similar computations with the other weights $(p,q)=(1,3)$ reveal that $\eta_1 \neq \exp(\xi_{13})$ as well.

\section{Proofs}

\subsection{Proof of Lemma \ref{V->puiseux-laurent}}

\begin{proof}
On one hand the Puiseux series $V(\varphi, \rho) = \sum_{i \geq m} v_i(\varphi) \rho^{i/n}$ is transformed into a power series $V(\varphi, \sigma^n) = \sum_{i \geq m} v_i(\varphi) \sigma^{i}$. On the other hand, the Jacobian $D \phi(\sigma)$ of the change $\rho = \phi(\sigma) = \sigma^n$ is $D \phi(\sigma) = n \sigma^{n-1}$. Taking into account that inverse integrating factors are transformed under changes of variables as $\tilde{V}(\varphi, \sigma) = V(\varphi, \phi(\sigma))/D \phi(\sigma)$, the lemma follows.
\end{proof}

\subsection{Proof of Theorem \ref{teo-Fund-VPi}}

The proof of Theorem \ref{teo-Fund-VPi} follows joining Propositions \ref{Teorema-main-V-Pi} and \ref{prop-V-Piprime}. Before we need a preliminary result.

We define the set
\begin{equation}\label{C*}
C^* := C \backslash \{ \Theta^{-1}(0) \cup\{\rho=0\} \},
\end{equation}
and we analyze its connected components.

\begin{proposition}\label{prop-v-cr}
Let $U$ be a connected component of $C^*$. Then the vector field $\hat{\mathcal{X}} = \partial_\varphi + \mathcal{F}(\varphi, \rho) \partial_\rho$ is $C^k(U)$ and possesses a $C^k$ inverse integrating factor in $U$ with arbitrary positive integer $k$.
\end{proposition}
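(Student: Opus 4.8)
The plan is to prove the two assertions separately: the regularity of $\hat{\mathcal{X}}$ is immediate, while the existence of the inverse integrating factor is the substantive part, and its difficulty is entirely topological. First I would record that on $U$ the vector field is not merely $C^k$ but real-analytic, and — the decisive structural fact — that it has no singular points. Writing the weighted polar blow-up as a linear system for $(\dot\rho,\dot\varphi)$ whose coefficient determinant is the Jacobian $J(\varphi,\rho)=\rho^{p+q-1}(p\cos^2\varphi+q\sin^2\varphi)$, Cramer's rule exhibits $R$ and $\Theta$ as analytic functions of $(\varphi,\rho)$ for $\rho>0$, each carrying the common factor $1/J$; this factor cancels in the quotient $\mathcal{F}=R/\Theta$, so on $C^*$, where $\rho>0$ and $\Theta\neq0$ by the definition \eqref{C*}, $\mathcal{F}$ is a ratio of analytic functions with non-vanishing denominator, hence real-analytic and a fortiori $C^k$ for every $k$. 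Thus $\hat{\mathcal{X}}=\partial_\varphi+\mathcal{F}\partial_\rho\in C^k(U)$, and since its $\partial_\varphi$-component is identically $1$ it never vanishes: we are constructing an inverse integrating factor for a regular flow.

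Next I would reduce the equation \eqref{EDP-V}, which for $\hat{\mathcal{X}}$ reads $\hat{\mathcal{X}}(V)=(\partial_\rho\mathcal{F})\,V$, to a transport problem along orbits. Because $\dot\varphi\equiv1$, the orbits are graphs $\rho=\Phi(\varphi;\rho_0)$ of the flow of \eqref{eq3}, and along such a graph \eqref{EDP-V} collapses to the scalar linear equation $\tfrac{d}{d\varphi}V=(\partial_\rho\mathcal{F})\,V$, whose solution is $V=V_0\,\psi$ with $\psi(\varphi;\rho_0)=\exp\!\int_0^\varphi\partial_\rho\mathcal{F}(s,\Phi(s;\rho_0))\,ds=\partial\Phi/\partial\rho_0$, the solution of the variational equation. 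So producing an inverse integrating factor on $U$ amounts to prescribing a nowhere-zero value $V_0$ on a global cross-section $\Sigma=\{\varphi=0\}\cap U$ (transverse since $\dot\varphi\equiv1$) and transporting it by the analytic weight $\psi$; the only constraint is single-valuedness at points where an orbit revisits $\Sigma$, and the transported $V$ inherits the regularity of $V_0$ together with the analytic regularity of $\mathcal{F}$ and $\Phi$.

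The topology of $U$ then governs the difficulty. If $U$ is simply connected — for instance when $\Theta^{-1}(0)$ cuts the cylinder into cells so that each orbit meets $\Sigma$ at most once — the transported function is automatically single-valued and any analytic nowhere-zero $V_0$ already yields an analytic, hence $C^k$, inverse integrating factor. If $U$ is the annulus $\mathbb{S}^1\times(0,\epsilon)$ — the relevant case on $\Lambda\backslash\Lambda_{pq}$ — every orbit returns to $\Sigma$ and single-valuedness becomes exactly the fundamental functional equation \eqref{fundamental}, $V_0(\Pi(s))=\Pi'(s)\,V_0(s)$, where $\Pi$ is the return map and $\Pi'(s)=\psi(2\pi;s)$. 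I would solve it by the fundamental-domain method: fix $s_0\in\Sigma$, prescribe $V_0$ freely and nowhere-zero on the half-open interval $[s_0,\Pi(s_0))$, and extend to all of $\Sigma$ by iterating $V_0\circ\Pi=\Pi'\cdot V_0$ forward and backward, the $\Pi$-iterates of the fundamental domain exhausting $\Sigma$ because the origin is monodromic. Transporting this $V_0$ by $\psi$ then gives an inverse integrating factor on the whole annulus.

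The hard part will be the regularity of $V_0$, and hence of $V$, across the seam where the fundamental domain closes, at $s_0$ and $\Pi(s_0)$: there the free data and the data dictated by the functional equation must agree together with their first $k$ derivatives. Since $\Pi$ is $C^\infty$ (a composition of analytic flow maps on $\rho>0$) these are finitely many jet conditions, always meetable for a prescribed finite $k$ by an interpolation/bump construction — equivalently by taking $V_0$ proportional to the $C^k$-linearizing coordinate of the hyperbolic return map, and, in the parabolic non-center case, by absorbing the residual mismatch into a flat correction. This non-flatness at the seam is precisely the obstruction that in general blocks upgrading the argument to $C^\infty$ or analytic, and it is what makes the statement natural for arbitrary but finite $k$; every other ingredient is merely transport of analytic regularity along a nowhere-singular flow.
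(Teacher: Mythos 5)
Your regularity step and your transport mechanism are sound, and they constitute a genuinely different, more constructive route than the paper's: the paper does not build $V$ by hand at all. It observes that each component $U$ is a \emph{canonical region} of the flow of $\hat{\mathcal{X}}$, invokes the result of \cite{LLNZ} that the restricted flow is $C^k$-equivalent to the parallel flow of $\mathcal{Z}=\partial_\varphi$ via a $C^k$-diffeomorphism $\xi$, and pulls back the trivial inverse integrating factor $f(\rho)$ of $\mathcal{Z}$ as $\hat V=J_\xi^{-1}\,V\circ\xi$. Your annulus analysis — single-valuedness of the transported datum being exactly the cocycle equation $V_0\circ\Pi=\Pi'\,V_0$, solved on a fundamental domain of the fixed-point-free return map (and vacuous when $\Pi=\mathrm{id}$) — is correct in that case for finite $k$, and is in effect an elementary proof of the parallel-flow classification in the one situation where you carry it out. (A side remark: the seam is not the obstruction to $C^\infty$ you claim it is — defining $V_0$ near the right endpoint of the closed fundamental domain by the functional equation itself matches \emph{all} jets, so the method gives $C^\infty$; only analyticity is genuinely blocked.)

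The genuine gap is that your two cases do not cover the proposition as stated, and your section choice fails precisely in the cases the proposition was formulated for. When $\Theta^{-1}(0)\backslash\{\rho=0\}\neq\emptyset$, the components of $C^*$ in \eqref{C*} are curvilinear sectors wedged between branches of zero angular speed; for such a $U$ the set $\Sigma=\{\varphi=0\}\cap U$ is typically empty, and no single vertical segment need meet every orbit, because orbits of $\hat{\mathcal{X}}$ reach the curves of $\Theta^{-1}(0)$ bounding $U$ in finite $\varphi$-time (there $\mathcal{F}=R/\Theta$ blows up) and terminate, each orbit being a graph over its own $\varphi$-interval. Replacing $\{\varphi=0\}$ by a smarter transversal does not dispose of the problem: the existence of a global $C^k$ section meeting each orbit exactly once — equivalently, that the orbit space of $U$ is a Hausdorff $1$-manifold, i.e.\ that $U$ contains no separatrices and the restricted flow is parallel — is exactly the nontrivial input, and without it your transported $V$ is undefined on some orbits or multivalued where orbit classes fail to separate. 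This is the content the paper imports from the canonical-region structure and \cite{LLNZ}, and it is the one idea your construction is missing; once it is granted, your fundamental-domain computation in the annular/spiral component (the only one relevant on $\Lambda\backslash\Lambda_{pq}$, where the fundamental equation \eqref{fundamental} is later used) goes through as you wrote it, modulo shrinking the cylinder so that $\Pi$ is either the identity or fixed-point free, which for an analytic family follows from non-accumulation of limit cycles.
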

\begin{proof}
Clearly $\mathcal{F} \in C^k(U)$ since $\Theta^{-1}(0) \not\subset U$. Moreover, $U$ is a {\it canonical region} for the flow of $\hat{\mathcal{X}}$, that is, $U$ is an open connected component in the complement in $C$ of the union of the separatrices of $\hat{\mathcal{X}}$. Then the flow $\Phi|_{U}$ of $\hat{\mathcal{X}}$ restricted to $U$ is $C^k$-equivalent to the annular flow $\psi$ generated by the vector field $\mathcal{Z} = \partial_\varphi$ in $C$, see a proof in \cite{LLNZ}. That means that there exists a $C^k$-diffeomorphism $\xi$ from $U$ onto $C$ which takes orbits of $\Phi$ onto orbits of $\psi$ preserving or reversing simultaneously the sense of all orbits. Since $\hat{\mathcal{X}}= \xi^* \mathcal{Z}$ and $V(\varphi, \rho) = f(\rho)$ is a $C^k$ inverse integrating factor of $\mathcal{Z}$ for any real $C^k$ function $f$, it follows that $\hat{V} = J_\xi^{-1} V \circ \xi$ is a $C^k$ inverse integrating factor of $\hat{\mathcal{X}}$ in $U$, where $J_\xi$ denotes the Jacobian determinant of $\xi$.
\end{proof}

Let $\Phi(\varphi; \rho_0)$ be the flow of the vector field $\hat{\mathcal{X}} = \partial_\varphi + \mathcal{F}(\varphi, \rho) \partial_\rho$ with initial condition $\Phi(0; \rho_0) = \rho_0 >0$ sufficiently small. We define the quantity
\begin{equation}\label{I-def}
\mathcal{I}(\rho_0) = PV \int_0^{2 \pi} \frac{\partial \mathcal{F}}{\partial \rho}(\varphi, \Phi(\varphi; \rho_0)) \, d \varphi,
\end{equation}
provided it exists, and it will play a fundamental role in the next results.

Since $C^*$ is the union of canonical regions in the cylinder $C$ and $\hat{\mathcal{X}} \in C^1(C^*)$, there is always an inverse integrating factor $V \in C^1(C^*)$ by Proposition \ref{prop-v-cr}. If $V$ satisfies additionally that $V \in C(\Theta^{-1}(0))$ then we have the following result.

\begin{proposition}\label{Teorema-main-V-Pi}
Assume the existence of an inverse integrating factor $V \in C^1(C^*)$ such that $V \in C(\Theta^{-1}(0))$ holds. If $V(0, \rho_0) \neq 0$ then $\mathcal{I}(\rho_0)$ exists and
\begin{equation}\label{Pi-eq}
V(0, \Pi(\rho_0)) = V(0, \rho_0) \, \exp\left( \mathcal{I}(\rho_0) \right).
\end{equation}
\end{proposition}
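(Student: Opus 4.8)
The plan is to transport $V$ along the orbit and use the defining PDE \eqref{EDP-V} to convert it into a scalar linear ODE. I set $W(\varphi) := V(\varphi, \Phi(\varphi; \rho_0))$ and compute, on any arc of the orbit contained in a canonical region $U \subset C^*$ (where both $V$ and $\mathcal{F}$ are $C^1$),
\begin{equation*}
\frac{dW}{d\varphi} = \frac{\partial V}{\partial \varphi}(\varphi, \Phi) + \frac{\partial V}{\partial \rho}(\varphi, \Phi)\, \mathcal{F}(\varphi, \Phi) = \frac{\partial \mathcal{F}}{\partial \rho}(\varphi, \Phi)\, W(\varphi),
\end{equation*}
where the last equality is precisely \eqref{EDP-V} evaluated along the trajectory. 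Hence $W$ solves a homogeneous scalar linear ODE and, wherever $W \neq 0$, one has $\frac{d}{d\varphi} \log|W| = \frac{\partial \mathcal{F}}{\partial \rho}(\varphi, \Phi)$.

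By monodromy the orbit issued from $\rho_0$ reaches $\Pi(\rho_0)$ at $\varphi = 2\pi$, and $\varphi \mapsto \Phi(\varphi; \rho_0)$ is a continuous function whose graph can meet $\Theta^{-1}(0) \backslash \{\rho = 0\}$ only at finitely many angles $0 < \varphi_1 < \cdots < \varphi_k < 2\pi$: the orbit cannot lie on an arc of $\Theta^{-1}(0)$, since there $\dot\varphi = \Theta = 0$ would break the $\varphi$-parametrisation, and finiteness follows from the analyticity of $\Theta$ on the compact $\varphi$-range. Between consecutive crossings the orbit lies in $C^*$, so I would integrate the identity above on each $[\varphi_j + \epsilon, \varphi_{j+1} - \epsilon]$ and sum. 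The hypothesis $V \in C(\Theta^{-1}(0))$ makes $W$ continuous on all of $[0, 2\pi]$, including the crossings; provided $W(\varphi_j) \neq 0$, the finite one-sided limits $\log|W(\varphi_j \pm \epsilon)| \to \log|W(\varphi_j)|$ make the telescoping sum collapse as $\epsilon \to 0$, giving
\begin{equation*}
PV \int_0^{2\pi} \frac{\partial \mathcal{F}}{\partial \rho}(\varphi, \Phi(\varphi; \rho_0))\, d\varphi = \log|W(2\pi)| - \log|W(0)|.
\end{equation*}
This is exactly the assertion that $\mathcal{I}(\rho_0)$ in \eqref{I-def} exists and equals $\log|V(0, \Pi(\rho_0)) / V(0, \rho_0)|$, once I use the $2\pi$-periodicity $W(2\pi) = V(2\pi, \Pi(\rho_0)) = V(0, \Pi(\rho_0))$ and $W(0) = V(0, \rho_0)$.

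The crux, and the step I expect to be the main obstacle, is the nonvanishing of $W$ along the orbit. Since $W(0) = V(0, \rho_0) \neq 0$ and the zero set of an inverse integrating factor is flow-invariant on $C^*$ (immediate from the linear ODE above on each canonical region), $W$ cannot vanish at interior points of $C^*$; the only candidates are the crossings $\varphi_j$, where $\mathcal{F}$ and the integrand blow up. I would exclude $W(\varphi_j) = 0$ by combining this flow-invariance with the continuity $V \in C(\Theta^{-1}(0))$: a zero of $W$ at $\varphi_j$ would force the orbit to be tangent to $\{V = 0\}$ at a point of $\Theta^{-1}(0)$ while remaining off $\{V = 0\}$ on both sides, a configuration incompatible with monodromy at the origin. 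Granting this, $W$ is continuous and nowhere zero on $[0, 2\pi]$, hence of constant sign, so $W(2\pi)$ and $W(0)$ agree in sign, the absolute values can be removed, and exponentiating yields $V(0, \Pi(\rho_0)) = V(0, \rho_0) \exp(\mathcal{I}(\rho_0))$. It is precisely the blow-up of $\mathcal{F}$ at the characteristic crossings that makes the principal value the natural object here and that forces an essential use of the continuity hypothesis on $\Theta^{-1}(0)$.
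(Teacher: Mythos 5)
Your proposal follows essentially the same route as the paper's proof: evaluate the defining relation \eqref{EDP-V_vector} along the flow to obtain the scalar linear ODE \eqref{evaluar-flow} for $W(\varphi)=V(\varphi,\Phi(\varphi;\rho_0))$, integrate away from the finitely many crossings $\bar\varphi_1<\cdots<\bar\varphi_\kappa$ of $\gamma_{\rho_0}$ with $\Theta^{-1}(0)$, and use $V\in C(\Theta^{-1}(0))$ to make $\log|W(\sigma)|$ a continuous primitive on all of $[0,2\pi]$, so that the telescoping sum collapses as $\varepsilon\to 0^+$, which yields simultaneously the existence of the principal value $\mathcal{I}(\rho_0)$ and the identity $\mathcal{I}(\rho_0)=\log\left|V(0,\Pi(\rho_0))/V(0,\rho_0)\right|$; periodicity of $V$ in $\varphi$ then gives \eqref{Pi-eq}. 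Up to this point your write-up matches the paper's argument almost verbatim, including the recognition that the blow-up of $\partial_\rho\mathcal{F}$ at the crossings is what makes the principal value the right object.

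The one place where you genuinely depart from the paper is the step you yourself identify as the crux: excluding $W(\bar\varphi_j)=0$ at the crossings. Your argument --- that such a zero would force a tangency of the orbit with $\{V=0\}$ at a point of $\Theta^{-1}(0)$, ``a configuration incompatible with monodromy at the origin'' --- is not a proof as it stands: you give no mechanism by which monodromy forbids the orbit from touching $V^{-1}(0)$ at a single point of $\Theta^{-1}(0)$, and monodromy is a property of the polycycle $\{\rho=0\}$, not obviously of the zero set of $V$ at positive $\rho$. The paper argues differently and more directly: inside each connected component $U_i$ of $C^*$, the set $V^{-1}(0)$ is invariant under the flow (by uniqueness for the linear ODE, a zero of $W$ at an interior angle propagates along the whole orbit-arc in $U_i$, so zeros are never isolated along the orbit), and combining this flow-invariance with the continuity $V\in C(\Theta^{-1}(0))$ yields the dichotomy that the whole curve $\gamma_{\rho_0}$ is contained in $V^{-1}(0)$ if and only if $V(0,\rho_0)=0$. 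Hence the hypothesis $V(0,\rho_0)\neq 0$ gives at once $\gamma_{\rho_0}\cap V^{-1}(0)=\emptyset$, and in particular $W(\bar\varphi_j)\neq 0$ for every $j$. You should replace your tangency heuristic by this flow-invariance-plus-continuity dichotomy; once that is done, the rest of your argument, including the constant-sign observation that lets you drop the absolute values before exponentiating, goes through exactly as in the paper.
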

\begin{proof}
The partial differential equation that satisfies $V$ is
\begin{equation}\label{EDP-V_vector}
\hat{\mathcal{X}}(V) = V \, \partial_\rho \mathcal{F}.
\end{equation}
In the simplest case that $\Omega_{pq} = \emptyset$ it follows that $\Theta^{-1}(0) \backslash \{\rho=0\} = \emptyset$ and therefore $\mathcal{F} \in C^1(C)$ and $V \in C^1(C \backslash \{\rho=0\})$. Since $\{ \rho= 0 \} \cap \Phi(\varphi; \rho_0) = \emptyset$, evaluating \eqref{EDP-V_vector} along the smooth flow $\Phi(\varphi; \rho_0)$ gives
\begin{equation}\label{evaluar-flow}
\frac{d}{d \varphi} V(\varphi, \Phi(\varphi; \rho_0)) = \frac{\partial \mathcal{F}}{\partial \rho}(\varphi, \Phi(\varphi; \rho_0)) V(\varphi, \Phi(\varphi; \rho_0)).
\end{equation}
Integrating this equation we easily obtain the expression of $V$ on the flow:
\begin{equation}\label{V-on-flow-0}
V(\varphi, \Phi(\varphi; \rho_0)) = V(0, \rho_0) \, \exp\left( \int_0^\varphi \frac{\partial \mathcal{F}}{\partial \rho}(\sigma, \Phi(\sigma; \rho_0)) \, d \sigma \right).
\end{equation}
In particular, we see that the zero-set $V^{-1}(0) = \{ (\varphi, \rho) \in C : V(\varphi, \rho) =  0 \}$ is invariant for the flow of $\hat{\mathcal{X}}$. In other words, if $V(0, \rho_0) = 0$ for some $\rho_0 > 0$ then $V(\varphi, \Phi(\varphi; \rho_0)) \equiv 0$ for all $\varphi \in \mathbb{S}^1$.
\newline

Now we focus in the much more involved case with $\Theta^{-1}(0) \backslash \{\rho=0\} \neq \emptyset$, hence in particular $\Omega_{pq} \neq \emptyset$. Then $\mathcal{F}$ is no longer $C^1(C)$, rather $\mathcal{F} \in C^1(C \backslash \Theta^{-1}(0))$.

Let $U_i$, with $i=0, 1,\ldots, \kappa$, be the connected components of $C^*$. By the previous analysis and since both $\mathcal{F}$ and $V$ are of class $C^1(U_i)$, we stress that $V$ cannot have isolated zeros in $U_i$. Even more, since additionally $V \in C(\Theta^{-1}(0))$, the whole solution curve $\gamma_{\rho_0} = \{ (\varphi, \Phi(\varphi; \rho_0)) : \varphi \in \mathbb{S}^1 \}$ is contained in $V^{-1}(0)$ if and only if  $V(0, \rho_0) = 0$.

The curve $\gamma_{\rho_0}$ intersects $\Theta^{-1}(0)$ at some points $(\bar\varphi_i(\rho_0), \Phi(\bar\varphi_i(\rho_0); \rho_0)) \in  \Theta^{-1}(0)$ with $\bar{\varphi}_i(\rho_0) \in \mathbb{S}^1$ for $i=1, \ldots, \kappa$. Without loss of generality we may assume that $0 < \bar\varphi_i(\rho_0) < 2 \pi$ so that $\gamma_{\rho_0}$ contains the arcs $\{ (\varphi, \Phi(\varphi; \rho_0)) : 0 \leq \varphi \leq \bar\varphi_{1}(\rho_0) \} \subset U_0$, $\{ (\varphi, \Phi(\varphi; \rho_0)) : \bar\varphi_i(\rho_0) \leq \varphi \leq \bar\varphi_{i+1}(\rho_0) \} \subset U_i$ for $i=1, \ldots, \kappa-1$, and $\{ (\varphi, \Phi(\varphi; \rho_0)) : \bar\varphi_{\kappa}(\rho_0) \leq \varphi \leq 2 \pi \} \subset U_\kappa$.

We will evaluate \eqref{EDP-V_vector} along the flow $\Phi(\varphi; \rho_0)$ with initial condition $\rho_0$ such that $V(0, \rho_0) \neq 0$, hence $\gamma_{\rho_0} \cap V^{-1}(0) = \emptyset$, and with angles restricted to $\varphi \in \bar{I}_\varepsilon(\rho_0) = [0, 2 \pi] \backslash \bar{J}_\varepsilon(\rho_0)$ with $\bar{J}_\varepsilon(\rho_0) = \cup_{i=1}^\kappa (\bar\varphi_i(\rho_0)-\varepsilon, \bar\varphi_i(\rho_0)+\varepsilon)$. This gives equation \eqref{evaluar-flow} but only for the angles $\varphi \in \bar{I}_\varepsilon(\rho_0)$. Integrating \eqref{evaluar-flow} over $[0, \varphi] \cap \bar{I}_\varepsilon(\rho_0)$ and taking the limit when $\varepsilon \to 0^+$ we get
\begin{equation}\label{int-vF}
PV \int_{0}^{\varphi} \frac{\frac{d}{d \sigma} V(\sigma, \Phi(\sigma; \rho_0))}{V(\sigma, \Phi(\sigma; \rho_0))} \, d \sigma = PV \int_0^\varphi \frac{\partial \mathcal{F}}{\partial \rho}(\sigma, \Phi(\sigma; \rho_0)) \, d \sigma.
\end{equation}

On the other hand, the function $\bar{P}_1(\sigma; \rho_0) = \log |V(\sigma, \Phi(\sigma; \rho_0))|$ is clearly a continuous primitive in $\bar{I}_\varepsilon(\rho_0)$ of the integrand of the left hand side in \eqref{int-vF}. But since $V$ is continuous on $\Theta^{-1}(0) \backslash \{\rho=0\}$ and $\Phi(.; \rho_0)$ is continuous at $\bar{\varphi}_i(\rho_0)$ for $i=1, \ldots, \kappa$, it follows that $\bar{P}_1(\sigma; \rho_0)$ is indeed continuous in $[0, \varphi]$. Therefore we can write
\begin{eqnarray*}
PV \int_0^\varphi \frac{\partial \mathcal{F}}{\partial \rho}(\sigma, \Phi(\sigma; \rho_0)) \, d \sigma &=& \lim_{\varepsilon \to 0^+}  \int_{[0, \varphi] \cap \bar{I}_\varepsilon(\rho_0)} \frac{\frac{d}{d \sigma} V(\sigma, \Phi(\sigma; \rho_0))}{V(\sigma, \Phi(\sigma; \rho_0))} \, d \sigma  \\
 &=&  \lim_{\varepsilon \to 0^+} \sum_i \bar{P}_1(\bar\varphi_{i+1}(\rho_0) - \varepsilon; \rho_0) - \bar{P}_1(\bar\varphi_i(\rho_0) + \varepsilon; \rho_0) \\
 &=& \bar{P}_1(\varphi; \rho_0) - \bar{P}_1(0; \rho_0) = \log \left| \frac{V(\varphi, \Phi(\varphi; \rho_0))}{V(0, \rho_0)} \right|.
\end{eqnarray*}
In particular $\mathcal{I}(\rho_0)$ exists. Also from here we deduce
\begin{equation}\label{V-on-flow}
V(\varphi, \Phi(\varphi; \rho_0)) = V(0, \rho_0) \, \exp\left( PV \int_0^\varphi \frac{\partial \mathcal{F}}{\partial \rho}(\sigma, \Phi(\sigma; \rho_0)) \, d \sigma \right).
\end{equation}
Evaluating this equation at $\varphi = 2 \pi$ and using the $2 \pi$-periodicity of $V$ in the variable $\varphi$ we obtain relation \eqref{Pi-eq}.
\end{proof}

\begin{proposition} \label{prop-V-Piprime}
Restricting family \eqref{DCC-1} to $\Lambda \backslash \Lambda_{pq}$ one has $\Pi'(\rho_0) = \exp\left( \mathcal{I}(\rho_0) \right)$.
\end{proposition}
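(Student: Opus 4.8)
The plan is to compute $\Pi'(\rho_0)$ directly from the first variational equation of the flow $\Phi(\varphi;\rho_0)$, exploiting that the restriction to $\Lambda\backslash\Lambda_{pq}$ keeps the whole orbit $\gamma_{\rho_0}$ inside the region where $\mathcal{F}$ is smooth. First I would record the consequences of this restriction: by \eqref{critical-L} we have $\Theta^{-1}(0)\backslash\{\rho=0\}=\emptyset$, so $\mathcal{F}=R/\Theta\in C^1(C\backslash\{\rho=0\})$ and, on the connected set $\{0<\rho\ll 1\}$, $\Theta$ never vanishes and hence keeps a constant sign. Since $R(\varphi,0)\equiv 0$ makes $\{\rho=0\}$ invariant, the solution starting at $\rho_0>0$ satisfies $\Phi(\varphi;\rho_0)>0$ for all $\varphi$, so $\gamma_{\rho_0}$ avoids $\{\rho=0\}$ and therefore lies in the smoothness locus of $\mathcal{F}$; in particular the principal value in \eqref{I-def} reduces to an ordinary integral and $\mathcal{I}(\rho_0)$ exists.

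The main step is to differentiate the flow with respect to its initial condition. Because $\mathcal{F}$ is $C^1$ along $\gamma_{\rho_0}$, the classical theorem on smooth dependence on initial data applies and the function $y(\varphi):=\partial\Phi/\partial\rho_0(\varphi;\rho_0)$ is well defined and solves the variational equation obtained by differentiating $d\Phi/d\varphi=\mathcal{F}(\varphi,\Phi(\varphi;\rho_0))$ with respect to $\rho_0$, namely
\begin{equation*}
\frac{dy}{d\varphi}=\frac{\partial\mathcal{F}}{\partial\rho}(\varphi,\Phi(\varphi;\rho_0))\,y,\qquad y(0)=\frac{\partial\Phi}{\partial\rho_0}(0;\rho_0)=1,
\end{equation*}
where the initial value is $1$ because $\Phi(0;\rho_0)=\rho_0$. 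This is a scalar linear homogeneous ODE, so its solution is
\begin{equation*}
y(\varphi)=\exp\left(\int_0^\varphi\frac{\partial\mathcal{F}}{\partial\rho}(\sigma,\Phi(\sigma;\rho_0))\,d\sigma\right).
\end{equation*}

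Finally I would evaluate at $\varphi=2\pi$. Since $\Pi(\rho_0)=\Phi(2\pi;\rho_0)$ we have $\Pi'(\rho_0)=y(2\pi)$, and comparing the integral above with the definition \eqref{I-def} of $\mathcal{I}(\rho_0)$ (which, as noted, is an honest integral here) yields $\Pi'(\rho_0)=\exp(\mathcal{I}(\rho_0))$, as claimed. I do not expect a serious obstacle: the only delicate point is guaranteeing that the variational equation is valid, i.e. that the orbit stays in the $C^1$ locus of $\mathcal{F}$ and that $\varphi$ remains a legitimate time parameter all the way around the cylinder; both are secured by the hypothesis $\lambda\in\Lambda\backslash\Lambda_{pq}$ together with the invariance of $\{\rho=0\}$, which is precisely the role that restriction plays. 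Combined with Proposition \ref{Teorema-main-V-Pi}, this identity yields the fundamental equation \eqref{fundamental} of Theorem \ref{teo-Fund-VPi}.
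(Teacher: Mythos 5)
Your proposal is correct and follows essentially the same route as the paper: both derive the first variational equation for $y(\varphi)=\partial\Phi/\partial\rho_0(\varphi;\rho_0)$ with $y(0)=1$, solve this scalar linear homogeneous equation (you write the exponential formula directly, the paper integrates the logarithmic derivative of $y$, which is the same computation), and evaluate at $\varphi=2\pi$ using $\Pi(\rho_0)=\Phi(2\pi;\rho_0)$. Your preliminary observations --- that the restriction to $\Lambda\backslash\Lambda_{pq}$ forces $\Theta^{-1}(0)\backslash\{\rho=0\}=\emptyset$, that the invariance of $\{\rho=0\}$ keeps the orbit in the $C^1$ locus of $\mathcal{F}$, and that the principal value in the definition of $\mathcal{I}(\rho_0)$ is then an ordinary integral --- match the paper's justification that in this setting $C^*=C\backslash\{\rho=0\}$ and the smooth-dependence machinery applies without further hypotheses.
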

\begin{proof}
When the family is restricted to $\Lambda \backslash \Lambda_{pq}$ then $\Theta^{-1}(0) \backslash \{\rho=0\} = \emptyset$ and $C^*$ is just $C \backslash \{\rho=0\}$. In particular $C^*$ is open and connected and there is $V \in C^1(C^*)$ from Proposition \ref{prop-v-cr}. Therefore Proposition \ref{Teorema-main-V-Pi} works and the condition $V \in C(\Theta^{-1}(0))$ is not necessary.
\newline

On the other hand, by definition, $\Phi(\varphi; \rho_0)$ satisfies
$$
\frac{\partial \Phi}{\partial \varphi}(\varphi; \rho_0) = \mathcal{F}(\varphi, \Phi(\varphi; \rho_0)), \ \ \Phi(0; \rho_0) = \rho_0 > 0,
$$
for all $\varphi \in [0, 2 \pi]$. We can differentiate both expressions with respect to $\rho_0$ and permute the derivation order to get the first variational equation
$$
\frac{\partial}{\partial \varphi} \left( \frac{\partial \Phi}{\partial \rho_0}(\varphi; \rho_0) \right) = \frac{\partial \mathcal{F}}{\partial \rho}(\varphi, \Phi(\varphi; \rho_0)) \, \frac{\partial \Phi}{\partial \rho_0}(\varphi; \rho_0), \ \ \ \frac{\partial \Phi}{\partial \rho_0}(0; \rho_0) = 1.
$$
This is a scalar linear homogeneous first order differential equation with initial condition different from zero, hence its solution $\frac{\partial \Phi}{\partial \rho_0}(\varphi; \rho_0)$ never vanishes. Integrating this equality on $[0, \varphi]$ gives
\begin{eqnarray*}
\int_{0}^\varphi \frac{\partial \mathcal{F}}{\partial \rho}(\sigma, \Phi(\sigma; \rho_0)) \, d \sigma &=& \int_{0}^\varphi  \frac{\frac{\partial}{\partial \varphi} \left( \frac{\partial \Phi}{\partial \rho_0}(\sigma; \rho_0) \right)}{\frac{\partial \Phi}{\partial \rho_0}(\sigma; \rho_0)} d \sigma.
\end{eqnarray*}
The function $\bar{P}_2(\sigma; \rho_0) = \log\left| \frac{\partial \Phi}{\partial \rho_0}(\sigma; \rho_0) \right|$ is a continuous primitive in $[0, 2 \pi]$ of the integrand of the right-hand side, hence we obtain
\begin{eqnarray*}
\int_{0}^\varphi \frac{\partial \mathcal{F}}{\partial \rho}(\sigma, \Phi(\sigma; \rho_0)) \, d \sigma &=& \bar{P}_2(\varphi; \rho_0) - \bar{P}_2(0; \rho_0)  = \log\left| \frac{\partial \Phi}{\partial \rho_0}(\varphi; \rho_0) \right|.
\end{eqnarray*}
Evaluating this expression at $\varphi = 2 \pi$ yields $\mathcal{I}(\rho_0) = \log\left| \Pi'(\rho_0) \right|$ or equivalently $\Pi'(\rho_0) = \exp\left( \mathcal{I}(\rho_0) \right)$.
\end{proof}

\subsection{Proof of Theorem \ref{lemma-fundamental}}

\begin{proof}
We restrict the analysis to $m \geq 1$. Thus we consider the analytic vector field $\mathcal{Y} = V(0, \rho) \partial_\rho + V(0, \Pi) \partial_\Pi$ associated to the fundamental equation \eqref{fundamental}, that is, $V(0, \Pi(\rho)) = V(0, \rho) \, \Pi'(\rho)$. We look for the invariant branches $\Pi^*(\rho)$ of this differential equation at the origin, that is, we search for asymptotic expansion of solutions  $\Pi^*(\rho)$ of this equation with initial condition $\Pi^*(0) = 0$. We remark that the former Cauchy problem does not has uniqueness of solutions in general, that the trivial solution $\Pi^*(\rho) = \rho$ is always present, and that the Poincar\'e map $\Pi(\rho)$ is contained in the set of such invariant branches.

First of all we claim that the Dulac asymptotic expansion \eqref{Poinc-expan} of $\Pi$ has no logarithmic terms, that is, the polynomials $P_j$ in \eqref{Poinc-expan} are just constants and the asymptotic expansion \eqref{Poinc-expan} is indeed a Puiseux series. The proof of this claim comes after a tedious but straight computation just inserting \eqref{Poinc-expan} into the differential equation \eqref{fundamental} and equating the coefficients of the different terms $\rho^\alpha \log^j(\rho)$ with $\alpha \in \mathbb{R}$ and $j \in \mathbb{N}$. This nice behaviour of $\Pi$ is a consequence of the fact that $\Pi$ must satisfy the differential equation \eqref{fundamental}.

The Newton diagram $\mathbf{N}(\mathcal{Y})$ has one segment with endpoints $(1,m)$ and $(m,1)$, hence with weight $(p,q)=(1,1)$. This confirms the well-known fact that the Poincar\'e map $\Pi$, as well as all the invariant branches at the origin of \eqref{fundamental},  has an expansion of the form $\Pi(\rho) = \eta_1 \rho + o(\rho)$.

Since $0 \not\in \Omega_{pq}$ it follows that $v_m(0) \neq 0$ and without lost of generality we may take $v_m(0) = 1$ just dividing $V$ by $v_m(0)$.

The index $n$ of the asymptotic expansion of a branch $\Pi^*(\rho) = \eta_1 \rho + o(\rho)$ will be computed using Remark \ref{Demina-remark}.  The reduced equation of the dominant balance $E_0(\Pi(\rho), \rho) = \rho^m \Pi'(\rho) - \Pi^m$ and its formal G\^{a}teaux derivative at $\eta_1 \rho$ which is
\begin{eqnarray*}
\frac{\delta E_0}{\delta \Pi}[\eta_1 \rho] &=& \lim_{s \to 0} \frac{E_0[\eta_1 \rho + s \rho^{1+j}, \rho] - E_0[\eta_1 \rho, \rho]}{s} = \Xi(j) \rho^{m+j}
\end{eqnarray*}
with $\Xi(j) = 1 + j - m \not\equiv 0$. Then the Fuchs index is the root of $\Xi$, that is, $j = m-1 \not\in \mathbb{Q}^+ \backslash \mathbb{N}$ so that $n=p = 1$ and $\Pi^*$ has an expansion of the form $\Pi^*(\rho) = \sum_{j \geq 1} \eta_j \rho^j$. This proves that the Poincar\'e map $\Pi$ has a formal power series expansion. We notice that we cannot guarantee this formal series $\Pi^*(\rho)$ possesses uniquely determined coefficients since the Fuchs index $m-1 \in \mathbb{Q}^+ \cup \{0\}$.

The polynomial $\mathcal{Q}$ defined in \eqref{det-pol-2} is given in our analysis by the equation
$$
\rho^m \frac{d}{d \rho}(\eta \rho) - (\eta \Pi)^m = \mathcal{Q}(\eta) \rho^{r+1},
$$
that is, $\mathcal{Q}(\eta) = \eta (1-\eta^{m-1})$. We continue assuming $m > 1$ so that $\mathcal{Q}(\eta) \not\equiv 0$ and $\eta_1$ must is a real root of $\mathcal{Q}$. The non-zero roots of $\mathcal{Q}$ are $\eta = \zeta_k = \exp(2 k \pi i/ (m-1))$ with $k=0, \ldots, m-2$, all roots of unity. Therefore $\eta_1 = \zeta_0 = 1$ since $\eta_1$ must be a positive real root of $\mathcal{Q}$. We emphasize that although $1$ is a simple root of $\mathcal{Q}$ because $\mathcal{Q}'(1) = 1-m \neq 0$ this does not imply the uniqueness of the branches $\Pi^*(\rho) = \rho + \cdots$, hence we still do not know if the origin is a center or not.

Now we insert the expression of $\Pi$ and that of $V$ given in \eqref{V-Cw-def} into equation \eqref{fundamental} and we get
\begin{equation}\label{power-series-fund}
\sum_{i \geq m} v_i(0) \rho^i \left(1 + \sum_{j \geq 2} \eta_j \rho^{j-1} \right)^{i} = \left( \sum_{i \geq m} v_i(0) \rho^{i} \right) \left( 1 + \sum_{j \geq 2} j \eta_j \rho^{j-1} \right).
\end{equation}
We equate the coefficients of like powers of $\rho$ in this equation. For example, from the power $\rho^{m+1}$ we get $(m-2) \eta_2 = 0$. After the forthcoming detailed analysis we claim that $\eta_i = 0$ for $2 \leq i \leq m-1$ and that when $\eta_m = 0$ then $\eta_i = 0$ for all $i > m$ and this will prove statement (ii). We will prove the claim using the following formulas for power series raised to integer powers and product of two power series.

\begin{remark}\label{series-product}
{\rm Given two power series $\sum_{i \geq 0} a_i x^i$ and $\sum_{i \geq 0} b_i x^i$ and $n \in \mathbb{N}$ we have $\left( \sum_{i \geq 0} a_i x^i \right) \left( \sum_{i \geq 0} b_i x^i \right) = \sum_{i \geq 0} c_i x^i$ with $c_i = \sum_{j+k=i} a_j b_k$, and $\left( \sum_{i \geq 0} a_i x^i \right)^n = \left( \sum_{i \geq 0} d_i x^i \right)$ where the coefficients $d_i$ can be calculated recursively by $d_0 = a_0^n$ and $d_i = (i a_0)^{-1}  \sum_{k=1}^i (k n -i + k) a_k d_{i-k}$ for $i \geq 1$, provided that $a_0 \neq 0$. }
\end{remark}

In our case we get
$$
\left(1 + \sum_{j \geq 2} \eta_j \rho^{j-1} \right)^{i} = \sum_{j \geq 0} \bar{\eta}_j^{(i)} \rho^j
$$
with $\bar{\eta}_0^{(i)} = 1$ and $\bar{\eta}_j^{(i)} = j^{-1} \sum_{k=1}^j (k i-j+k) \eta_{k+1} \bar{\eta}_{j-k}^{(i)}$ for $j \geq 1$. Therefore the left-hand side of \eqref{power-series-fund} is
$$
\sum_{i \geq m} v_i(0) \rho^i \left(1 + \sum_{j \geq 2} \eta_j \rho^{j-1} \right)^{i} = \sum_{i \geq m} v_i(0) \rho^i \left(\sum_{j \geq 0} \bar{\eta}_j^{(i)} \rho^j\right) = \sum_{i \geq m} \sum_{j \geq 0} v_i(0) \bar{\eta}_j^{(i)} \rho^{i+j}
$$
that we rewrite as
$$
\sum_{k \geq m} \alpha_k \rho^k, \ \ \mbox{with} \ \alpha_k = \sum_{i+j = k} v_i(0) \bar{\eta}_j^{(i)}.
$$
On the other hand, the right-hand side of \eqref{power-series-fund} is
$$
\left( \sum_{i \geq m} v_i(0) \rho^{i} \right) \left( 1 + \sum_{j \geq 2} j \eta_j \rho^{j-1} \right) = \left( \sum_{i \geq m} v_i(0) \rho^{i} \right) \left(\sum_{j \geq 1} j \eta_j \rho^{j-1} \right)  = \sum_{k \geq 0} c_k \rho^k,
$$
where $c_k = \sum_{i+j=k} v_j(0) (i+1) \eta_{i+1}$ with the convention $v_j(0) = 0$ when $j < m$. In particular, $c_k=0$ for $k < m$.

In short, \eqref{power-series-fund} holds if and only if $c_k = \alpha_k$ for $k \geq m$. The equality $c_m = \alpha_m$ is just an identity taking into account that $v_m(0) = \eta_1 = 1$. In summary, the following relations hold:
\begin{equation}\label{iteration-eta}
\sum_{i+j=k} v_j(0) (i+1) \eta_{i+1} = \sum_{i+j = k} v_i(0) \bar{\eta}_j^{(i)}, \ \ k \geq m+1,
\end{equation}
convention $v_j(0) = 0$ when $j < m$, $v_m(0) = 1$ and $\eta_1 = 1$ and the definitions $\bar{\eta}_0^{(i)} = 1$ and $\bar{\eta}_j^{(i)} = j^{-1} \sum_{s=1}^j (s i-j+s) \eta_{s+1} \bar{\eta}_{j-s}^{(i)}$ for $j \geq 1$.

Unfolding the relations \eqref{iteration-eta} for each $k \geq m+1$ gives a sequence of equations of the form
\begin{eqnarray} \label{system-eta-v}
0 &=& (m-2) \eta_2, \nonumber \\
0 &=& (m-3) \eta_3 + (m-1) \eta_2 \left( v_{m+1}(0) + \frac{1}{2} m \eta_2 \right), \nonumber \\
  &\vdots &  \\
0 &=& (m-j) \eta_j + (m-j+2) \eta_{j-1} (*) + (m-j+4) \eta_{j-2} (*) + \cdots \nonumber\\
 & &  + (m-1) \eta_{2} (*), \nonumber
\end{eqnarray}
for $j \geq 2$ where the terms $(*)$ in the expression of $\eta_{\ell} (*)$ for $j-1 \leq \ell \leq 2$ are polynomials in the variables $v_{i}(0)$ with $m+1 \leq i \leq j$ and $\eta_s$ with $2 \leq s \leq \ell$.

From the first equation \eqref{system-eta-v} we get that $\eta_2 = 0$ when $m>2$. From the first and the second equations \eqref{system-eta-v} one see that $\eta_2 = \eta_3 = 0$ when $m>3$. Continuing this kind of arguments, we conclude the first part of the former claim, that is, $\eta_i = 0$ for $2 \leq i \leq m-1$. The second part of the claim follows taking into account the first part together with the last equation \eqref{system-eta-v}. More precisely, when we take $j=m$ in the last equation \eqref{system-eta-v} it reduces to the identity $0=0$ and $\eta_m$ remains arbitrary. Now we move forward to values $j > m$ for which the last equation \eqref{system-eta-v} becomes:
\begin{itemize}
  \item When $j=m+1$ then $0 = - \eta_{m+1} + \eta_{m} (*)$;
  \item When $j=m+2$ then $0 = - 2 \eta_{m+2} + \eta_{m} (**)$;
\end{itemize}
and so on. In summary, taking $j=m + \ell$ with $\ell \geq 1$ the last equation \eqref{system-eta-v} becomes
\begin{equation}\label{relation_eta_m}
0 = - \ell \eta_{m+\ell} + \eta_{m} (**)
\end{equation}
with $(**)$  certain polynomial in the variables $v_{i}(0)$ with $m+1 \leq i \leq m+\ell$ and $\eta_m$. Equation \eqref{relation_eta_m} shows that when $\eta_m = 0$ then $\eta_i = 0$ for all $i > m$ and the statement (ii) of the theorem is proved.
\newline

The case $m=1$ corresponding to the degeneracy $\mathcal{Q}(\eta) \equiv 0$ can be analyzed in a similar way. We insert the expansion $\Pi(\rho) = \sum_{j \geq 1} \eta_j \rho^j$ and that of $V$ given in \eqref{V-Cw-def} with $m=1$ into equation \eqref{fundamental} and, instead of \eqref{power-series-fund}, we get
\begin{equation}\label{power-series-fund-2}
\sum_{i \geq 1} v_i(0) \rho^i \left(\sum_{j \geq 1} \eta_j \rho^{j-1} \right)^{i} = \left( \sum_{i \geq 1} v_i(0) \rho^{i} \right) \left( \sum_{j \geq 1} j \eta_j \rho^{j-1} \right).
\end{equation}
We use the formulas of Remark \ref{series-product} to rearrange the series in \eqref{power-series-fund-2}. After a careful analysis we can prove that the possible branches have the form $\Pi^*(\rho) = \eta_1 \rho + \cdots$ and once we fix $\eta_1$ then the branch is unique. Thus if $\eta_1 = 1$ then the origin is a center and statement (iii) is proved.
\end{proof}

\subsection{Proof of Lemma \ref{structure-main-2-new}}

\begin{proof}
Any Puiseux inverse integrating factor $V$ is defined in almost everywhere point of a small enough punctured neighborhood of $\{ \rho = 0 \}$ in the cylinder \eqref{cylinder}. More specifically $V$ is defined in $\hat{C} = C \backslash \{ \{ \rho = 0 \} \cup \Theta^{-1}(0) \cup \Delta^* \}$ where we define $\Delta^*$ as the set of lines $\Delta^* = \{ (\theta^*, \rho) \in C : \theta^* \in \Omega_{pq} \}$.

We recall that $\{\rho=0\}$ is a polycycle of \eqref{eq3} since the origin is a monodromic singularity of system \eqref{DCC-1}. Moreover, $V^{-1}(0) \cap \hat{C}$ is an invariant set of \eqref{eq3}. Therefore, taking the cylinder $C$ sufficiently small, if $V^{-1}(0) \backslash \{ \rho=0\} \neq \emptyset$ then either $V^{-1}(0)$ contains a sequence of periodic orbits of \eqref{eq3} accumulating at $\{ \rho=0\}$ (center case) or $V^{-1}(0)$ has $\{\rho=0\}$ as limit set (focus case). We are going to see that both cases are impossible. To see it we will use Lemma \ref{V->puiseux-laurent} by doing the change $\sigma^n = \rho$ with $n$ the index of $V$. This change keeps invariant the monodromy and transforms $V$ into a Laurent inverse integrating factor $\tilde V(\varphi, \rho) = \sum_{i \geq m} v_i(\varphi) \rho^{i}$. We take an angle $\hat\varphi \not\in \Omega_{pq}$ such that the line $L = \{ (\hat\varphi, \rho) \in  \hat{C} \cup \{\rho=0 \} \}$ exists and next we consider the function $f(\rho) = \rho^k \tilde V(\hat\varphi, \rho)$ with $k > m$ defined in $L$. Clearly $f$ is an analytic function at $\rho=0$ and consequently cannot possess a sequence of zeros accumulating at $\rho=0$. This fact proves the lemma.
\end{proof}

\subsection{Proof of Theorem \ref{lemma-fundamental-2}}

\begin{proof}
The key point of this proof comes from a result of \cite{Ga-Gi} where it is proved that $G$ can be rewritten as
\begin{equation}\label{def-G}
G(r) =  \int_{r}^{\Pi(r)} \frac{d \rho}{V(0, \rho)}.
\end{equation}

In the first part of the proof we are going to compute $\lim_{r \to 0^+} G(r)$ in each case $m=1$ and $m > 1$ separately.

We take $m=1$ and we expand \eqref{def-G} as follows
$$
G(r) =  \int_{r}^{\Pi(r)} \frac{d \rho}{\rho + \sum_{k \geq 2} v_k(0) \rho^k} =  \int_{r}^{\Pi(r)} \frac{1}{\rho} + R_1(\rho)   d \rho,
$$
where $R_1$ is an analytic function at $\rho=0$. Recall that $v_k(0) \in \mathbb{R}$ since $0 \not\in \Omega_{pq}$. Therefore
$$
G(r) =  \log\left( \frac{\Pi(r)}{r}\right) +  \int_{r}^{\Pi(r)} R_1(\rho)   d \rho = \log\left( \eta_1 + O(r) \right) +  \int_{r}^{\Pi(r)} R_1(\rho)   d \rho,
$$
where in the last step we use that $\Pi(r) = \eta_1 r + O(r^2)$. Thus $\lim_{r \to 0^+} G(r) = \log\left( \eta_1 \right)$.
\newline

Now we take $m > 1$ and we can assume without lost of generality that $v_m(0) = 1$ because, by statement (i) of Lemma 16 in \cite{GaGi2} the eventual zeros of $v_m$ must lie in $\Omega_{pq}$ but $0 \not\in \Omega_{pq}$. Then \eqref{def-G} is written as
$$
G(r) =  \int_{r}^{\Pi(r)} \frac{d \rho}{\rho^m + \sum_{k > m} v_k(0) \rho^k} = \int_{r}^{\Pi(r)} \left( \sum_{m \leq i \leq 1} \frac{\beta_i}{\rho^i} \right) + R_m(\rho)   d \rho,
$$
where $\beta_m = 1$ and $\beta_i \in \mathbb{R}$ for $i < m$ and $R_m$ is an analytic function at $\rho=0$. Integration leads to
\begin{eqnarray*}
G(r) &=&  \left[ \sum_{i=2}^m \frac{\beta_i}{(1-i) \rho^{i-1}} \right]_{r}^{\Pi(r)} + \beta_1 \log\left( \frac{\Pi(r)}{r}\right) + \int_{r}^{\Pi(r)} R_m(\rho) d \rho \\
 &=& \sum_{i=2}^m \frac{\beta_i}{1-i} \Delta_{i-1}(r) + \beta_1 \log\left( \frac{\Pi(r)}{r}\right) + \int_{r}^{\Pi(r)} R_m(\rho) d \rho,
\end{eqnarray*}
where, for any $k \in \mathbb{N}$, we have defined
$$
\Delta_k(r) := \frac{1}{\Pi^k(r)} - \frac{1}{r^k}.
$$
Taking into account that $\Pi(r) = r + \eta_m r^m (1 + O(r))$ one has $\Pi^k(r) = r^k (1 + O(r))$ and
$$
\Delta_k(r) = \frac{r^k - \Pi^k(r)}{r^k \Pi^k(r)}  = \frac{- \eta_m r^{m+k-1}(k + O(r))}{r^{2k} (1 + O(r))} = - \eta_m r^{m-k-1} (k + O(r)).
$$
Then $G$ is written as
\begin{eqnarray*}
G(r) &=& - \eta_m  \sum_{i=2}^m \frac{\beta_i}{1-i} r^{m-i} (i-1 + O(r)) + \\
 & &  \beta_1 \log\left( 1 + \eta_m r^{m-1} (1 + O(r)) \right) + \int_{r}^{\Pi(r)} R_m(\rho) d \rho.
\end{eqnarray*}
Therefore we conclude that $\lim_{r \to 0^+} G(r) = \eta_m$.
\newline

In the second part of the proof we are going to compute de first derivative $G'(r)$ by using Leibnitz rule in \eqref{def-G} and taking into account the fundamental relation \eqref{fundamental}. Recall that we can use Leibnitz rule because in the definition domain $I^+$ of $G$ the functions $\Pi(r)$ and $V(0,r)$ are analytic since $0 \not\in \Omega_{pq}$ and $V(0,r) \neq 0$ from statement (iii) of Lemma 18 of \cite{GaGi2}. Additionally \eqref{fundamental} works $\Lambda \backslash \Lambda_{pq}$ by Theorem \ref{teo-Fund-VPi}. The outcome is that
$$
G'(r) = \frac{1}{V(0, \Pi(r))} \Pi'(r) - \frac{1}{V(0, r)} \equiv 0,
$$
so $G(r)$ is a constant, called $\mathfrak{g}$ is the statement of the theorem, and the proof finishes using that $\lim_{r \to 0^+} G(r) = \mathfrak{g}$.
\end{proof}

\subsection{Proof of Theorem \ref{cor-fundamental}}

\begin{proof}
We restrict $\mathcal{X}$ to $E \subset \Lambda$ since then it makes sense the Bautin ideal $\mathcal{B}$. The fact that $\mathcal{B}$ is principal is just a direct consequence of Theorem \ref{lemma-fundamental}.

To prove statement (i) we define $f(\rho) = V(0, \rho) = \rho^m + \cdots$ so that the differential equation \eqref{fundamental} is written as $f(\Pi) = f(\rho) \, \Pi'(\rho)$. Separating variables one has that this equation has a first integral $H(\rho, \Pi) = F(\rho) - F(\Pi)$ where $F'(\rho) = 1/f(\rho)$. Of course $H$ is not well defined in a neighborhood of $(\rho, \Pi) = (0,0)$ since $1/f(\rho)$ has a pole of order $m$ at $\rho=0$.
\newline

Integrating term-by-term the Laurent series $1/f(\rho)$, that is convergent in a punctured neighborhood of $\rho=0$, we deduce that $F(\rho)$ has a pole of order $m-1 > 1$ at $\rho=0$ since by hypothesis the residue ${\rm Res}(1/f(\rho), 0) = 0$. More specifically, $F(\rho) = \frac{c}{\rho^{m-1}} + \cdots$ with $c = 1/(1-m) \neq 0$. In particular, for any $C_m \in \mathbb{C}$, the function $\hat{F}(\rho, \Pi) =  (\rho \Pi)^{m-1} (H(\rho, \Pi) - C_m)$ is analytic at $(\rho, \Pi) = (0,0)$ and its local zero-set contains, besides the artificial lines $\rho \Pi = 0$, the level curves of $H$ which are those in which we are really interested because the Poincar\'e map $\Pi(\rho)$ must satisfy $H(\rho, \Pi(\rho)) = C_m^*(\eta_m, \lambda) \in \mathbb{C}$ for any $\rho \geq 0$ sufficiently small and some concrete $C_m^*$.

The Taylor expansion of $\hat{F}$ at the origin is $\hat{F}(\rho, \Pi) = c (\Pi^{m-1} - \rho^{m-1}) + \cdots$ where the dots are higher order terms. When $m=2$ the Implicit Function Theorem applies to $\hat{F}(\rho, \Pi) = 0$ at $(\rho, \Pi) = (0,0)$, hence the Poincar\'e map $\Pi(\rho)$ is analytic at $\rho=0$.

The analytic curve $\hat{F}(\rho, \Pi) = 0$ has a critical point at $(\rho, \Pi) = (0,0)$ in case that $m > 2$. Here we also can deduce that the Poincar\'e map $\Pi(\rho)$ is analytic at $\rho=0$ because the determining polynomial associated to the branches $\Pi^*(\rho) = \eta \rho + \cdots$ is $\mathcal{P}(\eta) = c (\eta^{m-1}-1)$ and has $\eta=1$ as a simple zero since $\mathcal{P}'(1) = c(m-1) \neq 0$.
\newline

Now we shall prove statement (ii). First we recall that, assuming $\Pi(\rho)$ is analytic at $\rho=0$ for the hole family $\mathcal{X}|_E$, the cyclicity of the origin with respect to perturbation within the family is at most $\# \mathcal{B} - 1$, being $\# \mathcal{B}$ the cardinality of a minimal basis of $\mathcal{B}$. This cyclicity bound can be proved using a Rolle's Theorem kind of argument, see for example Lemma 6.1.6 and Theorem 6.1.7 in \cite{RS}. In our situation $\# \mathcal{B} = 1$ and this proves statement (ii).
\newline

To prove statement (iii) we notice that when $m=1$ the vector field $\mathcal{Y} = f(\rho) \partial_\rho + f(\Pi) \partial_\Pi$ associated to the fundamental equation \eqref{fundamental} has the form $\mathcal{Y} = (\rho+ \cdots) \partial_\rho + (\Pi+ \cdots) \partial_\Pi$. Then there exists an analytic tangent to the identity linearizing change of variables $(\rho, \Pi) \mapsto (\xi, \eta) = \psi(\rho, \Pi) = (\rho + \cdots, \Pi + \cdots)$ defined in a neighborhood of the origin such that $\psi_*\mathcal{Y} = \xi \partial_\xi + \eta \partial_\eta$ with associated differential equation $d \eta / d \xi = \eta/ \xi$, hence with solutions $\eta = c \, \xi$ being $c$ an arbitrary constant. Going back to the original variables, the general solution of \eqref{fundamental} can be found implicitly by $F(\rho, \Pi) = (\Pi + \cdots) - c (\rho + \cdots) = 0$. Since $F$ is analytic at the origin, $F(0,0)=0$ and $\partial_\Pi F(0,0) = 1 \neq 0$, applying the Implicit Function Theorem, we have that for any fixed $c$ there is a unique analytic solution $\Pi^*(\rho)$ of \eqref{fundamental}. From the first term in the Taylor expansion of $\Pi^*$ at $\rho=0$ we see that the Poincar\'e map $\Pi(\rho)$ (which we knew has a formal expansion) corresponds with the choice $c = \eta_1$.
\end{proof}

\section{Apendix}

\subsection{Invariant branches of $\mathcal{X}$ at singular points} \label{Sec-IB2}

We consider a real analytic planar vector field $\mathcal{X} = P(x,y) \partial_x + Q(x,y) \partial_y$ in a neighborhood of a singularity located at the origin. We say that a Puiseux series of the form
\begin{equation}\label{branch-first}
y_i^*(x) = \alpha_0 x^{k_1/k_2} + o(x^{k_1/k_2})
\end{equation}
with $\alpha_0 \in \mathbb{C} \backslash \{ 0 \}$ and $0 < k_1/k_2 \in \mathbb{Q}$ is an {\it invariant branch} of $\mathcal{X}$ at the origin if it satisfies term by term the differential equation $P(x,y) dy - Q(x,y) dx = 0$ associated to $\mathcal{X}$, that is,
\begin{equation}\label{eq-inv-branch}
P(x,y_i^*(x)) \frac{d y_i^*}{dx}(x) - Q(x, y_i^*(x)) = 0,
\end{equation}
for all $x$ in a half-neighborhood of $x=0$. Notice that any invariant branch of $\mathcal{X}$ at the origin must be complex-valuated (non-real) when the origin is a monodromic singularity of $\mathcal{X}$. In particular the curve $y - y^*_i(x) = 0$ (and also the complex conjugated curve $y - \bar{y}^*_i(x) = 0$) are invariant for the real vector field $\mathcal{X}$.

To determine the allowed rational leading exponents $k_1/k_2$ and leading coefficient $\alpha_0$ of the eventual invariant branches one needs to look at the {\it Newton diagram} $\mathbf{N}(\mathcal{X})$ of $\mathcal{X}$, hence we shortly define it. Taking the Taylor developments of $P$ and $Q$ at $(0,0)$ given by
$$
P(x,y) = \sum_{(i,j) \in \mathbb{N}^2} a_{ij} x^i y^{j-1}, \ \ Q(x,y) = \sum_{(i,j) \in \mathbb{N}^2} b_{ij} x^{i-1} y^{j},
$$
the support of $\mathcal{X}$ is defined as ${\rm supp}(\mathcal{X}) = \{ (i, j) \in \mathbb{N}^2 :  (a_{ij}, b_{ij}) \neq (0, 0)\}$ and next we consider the boundary of the convex hull of the set $\bigcup_{(i,j) \in {\rm supp}(\mathcal{X})} \{ (i,j) + \mathbb{R}_+^2 \}$ that
contains a polygon. In the monodromic scenario, the Newton diagram $\mathbf{N}(\mathcal{X})$ of $\mathcal{X}$ is composed by the  edges of that polygon. These edges have endpoints in the set $\mathbb{N}^2$ and, therefore, we can associate to each edge the {\it weights} $(p,q) \in \mathbb{N}^2$, with $p$ and $q$ coprimes, determined by the tangent $q/p$ of the angle between the considered segment and the ordinate axis. Throughout this work we will denote by $W(\mathbf{N}(\mathcal{X})) \subset \mathbb{N}^2$ the set whose elements are all the weights associated to the edges in $\mathbf{N}(\mathcal{X})$.

Once we pick up the weights $(p,q) \in W(\mathbf{N}(\mathcal{X}))$ we fix an expansion \eqref{campo-X} given by $\mathcal{X} = \sum_{j \geq r} \mathcal{X}_j$ where $r \geq 1$ and $\mathcal{X}_j = P_{p+j}(x,y) \partial_x + Q_{q+j}(x,y) \partial_y$ are $(p,q)$-quasihomogeneous vector fields of degree $j$. It can be shown that, for this $(p,q) \in W(\mathbf{N}(\mathcal{X}))$, the leading term $\alpha_0 x^{k_1/k_2}$ of any invariant branch \eqref{branch-first} must be a solution of the differential equation $P_{p+r}(x,y) dy - Q_{q+r}(x, y) dx = 0$ with $k_1/k_2 = q/p$ and $\alpha_0$ a root of a polynomial $\mathcal{Q}(\eta)$ defined by
\begin{equation}\label{det-pol-2}
P_{p+r}(x, \eta x^{q/p}) \frac{d}{d x}(\eta x^{q/p}) - Q_{q+r}(x, \eta x^{q/p}) = \mathcal{Q}(\eta) x^\frac{r+q}{p}.
\end{equation}
In particular, from \eqref{det-pol-2} we see that $y^p-\alpha_0 x^q = 0$ is an irreducible invariant algebraic curve of the leading vector field $\mathcal{X}_r$, see \eqref{campo-X}.

\begin{remark} \label{Demina-remark}
{\rm There is a general method, see \cite{Br,Bru,De}, to know which is the index $n$ of an invariant branch $y^*$ and here we only sketch a part of it using our notation. To each $(p,q) \in W(\mathbf{N}(\mathcal{X}))$ we have the $(p,q)$-quasihomogeneous expansion $\mathcal{X} = \mathcal{X}_r + \cdots$ given in \eqref{campo-X} with
\[
\mathcal{X}_r = P_{p+r}(x,y) \partial_x + Q_{q+r}(x,y) \partial_y
\]
and we associate the dominant balance $E_0[y(x), x] = P_{p+r}(x,y) y' - Q_{p+r}(x,y)$ where the prime indicates derivative with respect to $x$. This dominant balance satisfies $E_0[\alpha_0 x^{q/p}, x] \equiv  0$ where $y^*(x)= \alpha_0 x^{q/p} + \cdots$. Now we calculate the formal G\^{a}teaux derivative of the dominant balance at $y(x) = \alpha_0 x^{q/p}$, that is,
\begin{eqnarray*}
\frac{\delta E_0}{\delta y}[\alpha_0 x^{q/p}] &=& \lim_{s \to 0} \frac{E_0[\alpha_0 x^{q/p} + s x^{q/p+j},x] - E_0[\alpha_0 x^{q/p}, x]}{s} = \Xi(j) x^{\beta},
\end{eqnarray*}
for certain exponent $\beta(j, p, q)$. The zeros of $\Xi(j)$ are called the {\it Fuchs indices} of the dominant balance (also called resonances or Kovalevskaya exponents). In the particular case that there are no Fuchs indices in $\mathbb{Q}^+ \backslash \mathbb{N}$ (non-natural positive rationals numbers) then the branch $y^*$ has index $n = p$. }
\end{remark}

\subsection{Critical parameters and curves of zero angular speed}

From its definition \eqref{crit-set} we notice that $\Theta^{-1}(0) \neq \emptyset$ because $\Omega_{pq} \neq \emptyset$ and $(\varphi, \rho) = (\varphi^*, 0) \in \Theta^{-1}(0)$ for all $\varphi^* \in \Omega_{pq}$. Of course the set $\Theta^{-1}(0)$ is parameter-dependent and it may happen that, for parameters in $\Lambda_{pq} \subset \Lambda$, some curve (branch) emerges from a point $(\varphi^*, 0)$ and lies in $\Theta^{-1}(0)$. The opposite case occurs when the parameter belong to $\Lambda \backslash \Lambda_{pq}$ so that $\Theta^{-1}(0) \backslash \{ \rho = 0\} = \emptyset$. Therefore, for each weight $(p,q) \in W(\mathbf{N}(\mathcal{X}))$, and recalling the definition \eqref{critical-L} of the $(p,q)$-critical parameters, it may occurs that $\Lambda_{pq} = \emptyset$ and also that $\Lambda_{pq} = \Lambda$. For example, in \cite{Ga-Li-Ma-Ma} it is proved that $\Lambda_{11} = \emptyset$ when $\mathcal{X}$  is polynomial and defined by the sum of two homogeneous vector fields. Moreover, defining the {\it critical parameters} $\Lambda^* \subset \Lambda$ as the intersection
$$
\Lambda^* = \bigcap_{(p,q) \in W(\mathbf{N}(\mathcal{X}))} \Lambda_{pq},
$$
another possibility that covers all $\Lambda$ is that $\Lambda^* = \emptyset$ although $\Lambda_{pq} \neq \emptyset$ for all $(p,q) \in W(\mathbf{N}(\mathcal{X}))$.

\begin{remark}
{\rm Looking at the first terms in the expansion
\begin{equation}\label{expansion-Theta}
\Theta(\varphi, \rho) = G_r(\varphi) + G_{r+1}(\varphi) \rho + G_{r+2}(\varphi) \rho^{2} + O(\rho^3),
\end{equation}
we may write down sufficient conditions that guarantee $\Lambda_{pq} = \emptyset$. By monodromy we assume $G_r$ is positive semi-definite in $\mathbb{S}^1$ without loss of generality. Then the condition $\Lambda_{pq} = \emptyset$ holds if one of the following sentences are satisfied for all $\varphi \in J_\varepsilon = \cup_{i=1}^\ell (\varphi_i^*-\varepsilon, \varphi_i^*+\varepsilon)$ with $\varepsilon > 0$ sufficiently small:
\begin{itemize}
  \item $G_{r+1}(\varphi) \geq 0$ (with the linear approximation of $\Theta$);
  \item $G_{r+1}^2(\varphi) - 4 G_{r}(\varphi) G_{r+2}(\varphi) < 0$ (with the quadratic approximation of $\Theta$).
  \item We also can use the cubic approximation of $\Theta$, the discriminant of the cubic, and the Descartes's rule of signs to count how many roots are real positive or negative.
\end{itemize} }
\end{remark}

The point $(\varphi, \rho) = (\varphi^*, 0)$ with $\varphi^* \in \Omega_{pq}$ is a singularity of the map $\Theta$ because $\Theta(\varphi^*, 0) = \partial_\varphi \Theta(\varphi^*, 0) = 0$ since $G_r'(\varphi^*) = 0$ under monodromy, see \cite{Ga-Gi}. Therefore a branch of real positive solutions of the equation $\Theta(\varphi, \rho) = 0$ may bifurcate from the point $(\varphi, \rho) = (\varphi^*, 0)$. In that case there is continuous real-valued function $\rho^*(\varphi)$ defined in a half-neighborhood of $\varphi^*$ such that $\rho^*(\varphi^*) = 0$, $\Theta(\varphi, \rho^*(\varphi)) \equiv 0$, and
\begin{equation}\label{positive-rho}
\rho^*(\varphi) \geq 0.
\end{equation}
Condition $\Lambda_{pq} = \emptyset$  holds if and only if such real positive branches do not exist for any $\varphi^* \in \Omega_{pq}$. In this sense, $\Lambda_{pq} = \emptyset$ is a computable condition applying branching theory based on Newton diagram $\mathbf{N}(\Theta)$ of the analytic function map $\Theta$ at the singularities $(\varphi^*, 0)$. After a translation we put $\varphi^*= 0$ without lost of generality. Any branch (real or complex) emerging from $(\varphi, \rho) = (0, 0)$ can be locally expressed as convergent Puiseux series determined by the descending sections of $\mathbf{N}(\Theta)$ giving $\rho^*(\varphi) = \alpha_0 \varphi^{k_1/k_2} + o(\varphi^{k_1/k_2})$ with $\alpha_0 \in \mathbb{C} \backslash \{ 0 \}$ and $0 < k_1/k_2 \in \mathbb{Q}$. The leading coefficient $\alpha_0$ is a nonzero root of some computable determining polynomial $P(\eta)$ for the particular descending segment of $\mathbf{N}(\Theta)$ with negative slope $-k_1/k_2 \in \mathbb{Q}^-$. The Newton-Puiseux algorithm is used to determine also the first higher order terms of the branch $\rho^*(\varphi)$. More specifically, we say that a branch is {\it simple} if its leading coefficient $\alpha_0$ is a simple root of $P(\eta)$. The Puiseux series of a simple branch is $\rho^*(\varphi) = \sum_{j \geq 0} \alpha_i \varphi^{\frac{k_1+j}{k_2}}$ and it can be proved that if $\alpha_0 \in \mathbb{R}$ then all the coefficients $\alpha_i \in \mathbb{R}$ for all $i \in \mathbb{N}$ so that the branch is real, see for example \cite{VT} for a proof.

By Newton-Puiseux Theorem (see \cite{BK} for instance) there exists a finite factorization
\begin{equation}\label{factor-Puiseux-Theta}
\Theta(\varphi, \rho)  = u(\varphi, \rho)  \prod_{i} (\rho - \rho^*_i(\varphi))^{\kappa_i}
\end{equation}
where $u$ is an analytic unit $u(0,0) \neq 0$, and the $\rho^*_i(\varphi)$ the branches emerging from $(\varphi, \rho) = (0,0)$. Condition $\Lambda_{pq} = \emptyset$ is equivalent to say that, to each singularity $(\varphi^*, 0)$ with $\varphi^* \in \Omega_{pq}$, all the emerging branches are either complex or negative. A sufficient condition for this to happen is that all the leading coefficients $\alpha_0$ associated to each descending segment of $\mathbf{N}(\Theta)$ lies in $\mathbb{C} \backslash \mathbb{R}$, for all $\varphi^* \in \Omega_{pq}$. Of course, if some leading coefficient $\alpha_0$ is a multiple real root of $P(\eta)$ it also may occur that the associated branch be complex. Recall that, even when a branch $\rho^*(\varphi)$ is real next we need to check condition \eqref{positive-rho}.

We say that $\Theta(\varphi, \rho)$ and $\hat\Theta(\varphi, \rho)$ are equivalent if there exist a local diffeomorphism of $\mathbb{R}^2$ of the form $(\varphi, \rho) \mapsto (\alpha(\varphi, \rho), \beta(\rho))$ mapping
the origin to $(\varphi, \rho) = (\varphi^*, 0)$ and a positive function $U(\varphi, \rho)$ such that
\begin{equation}\label{equivalency}
U(\varphi, \rho) \, \Theta(\alpha(\varphi, \rho), \beta(\rho)) = \hat\Theta(\varphi, \rho)
\end{equation}
and where the diffeomorphism preserves the orientations of $\varphi$ and $\rho$, that is, the derivatives $\partial_\varphi \alpha(\varphi, \rho) > 0$ and $\partial_\rho \beta(\rho) > 0$. We call $\hat\Theta$ a normal form of $\Theta$ when it is the simplest representative from a whole equivalence class of mappings. In the particular case that $\rho$ is not changed, that is when $\beta(\rho) = \rho$, then $\Theta(\varphi, \rho)$ and $\hat\Theta(\varphi, \rho)$ are called strongly equivalent.

For each $\rho$ close to zero, let $N_\Theta(\rho)$ be the number of zeros of $\Theta(., \rho)$ locally in some neighborhood of $\varphi^*$. One of the most important consequences of the equivalence of $\Theta$ and $\hat\Theta$ is that $N_\Theta(\rho) = N_{\hat\Theta}(\beta(\rho))$. Using these ideas of singularity theory of maps, the following lemma is proved in Proposition 9.2 of chapter II, page 95 of \cite{GS}. We state it using pur notation.

\begin{lemma}\label{non-elementary}
Let $\Theta(\varphi, \rho)$ be the analytic map with expansion \eqref{expansion-Theta} about $\rho=0$ and the point $(\varphi, \rho) = (\varphi^*, 0)$ with $\varphi^* \in \Omega_{pq}$ a singularity of $\Theta$. If the function $G_{r+1}$ has a simple zero at $\varphi^*$ then the following normal form $\hat\Theta$ is characterized under strongly equivalency: $\hat\Theta(\varphi, \rho) = \delta_1 \varphi^k + \delta_2 \varphi \rho$ where $k \geq 3$ is the multiplicity of $G_r$ at $\varphi^*$ and
$$
\delta_1 = {\rm sgn}\left( \frac{d^{k} G_r}{d \varphi^{k}}(\varphi^*) \right) \neq 0, \ \ \delta_2 = {\rm sgn}\left(G'_{r+1}(\varphi^*) \right) \neq 0.
$$
\end{lemma}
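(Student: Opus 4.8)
The plan is to read the statement as a recognition problem in the singularity theory of scalar germs carrying a distinguished parameter, with $\varphi$ in the role of the state variable and $\rho$ in the role of the frozen bifurcation parameter, and then to match the jet of $\Theta$ at the singular point against the normal form $\hat\Theta=\delta_1\varphi^k+\delta_2\varphi\rho$. After translating so that $\varphi^*=0$, the first step is to extract the germ data of $\Theta$ at the origin from the expansion \eqref{expansion-Theta}. Writing $g=\Theta$ one has $\partial_\varphi^{\,j}g(0,0)=G_r^{(j)}(0)$, so that $k$ being the multiplicity of $G_r$ at $\varphi^*$ means precisely that $g=\partial_\varphi g=\cdots=\partial_\varphi^{\,k-1}g=0$ and $\partial_\varphi^{\,k}g\neq 0$ at the origin; monodromy already forces $\partial_\varphi g(0,0)=G_r'(0)=0$. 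Differentiating once in $\rho$ gives $\partial_\rho g(0,0)=G_{r+1}(0)=0$ and $\partial_\varphi\partial_\rho g(0,0)=G_{r+1}'(0)\neq0$, which is exactly the hypothesis that $G_{r+1}$ has a simple zero at $\varphi^*$. The signs of the first non-vanishing derivatives, namely $\mathrm{sgn}(\partial_\varphi^{\,k}g(0,0))=\mathrm{sgn}(G_r^{(k)}(\varphi^*))$ and $\mathrm{sgn}(\partial_\varphi\partial_\rho g(0,0))=\mathrm{sgn}(G_{r+1}'(\varphi^*))$, are the candidates for $\delta_1$ and $\delta_2$.

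These conditions, \emph{pure $\varphi$--multiplicity $k$ together with $g_\rho=0$ and $g_{\varphi\rho}\neq0$ at the singularity}, are exactly the defining relations of the normal form $\delta_1\varphi^k+\delta_2\varphi\rho$ under the strong equivalence \eqref{equivalency} in which $\rho$ is not moved. Hence, once the matching above is in place, the conclusion is immediate from Proposition 9.2 of Chapter II of \cite{GS}. For completeness I would also give the self-contained reduction, which I describe next.

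Since $\{g_{\varphi\rho}(0,0)\neq0,\ g_\rho(0,0)=0\}$ places the two lattice points $(k,0)$ and $(1,1)$ and a point $(0,\ell)$ with $\ell\geq2$ on the Newton diagram of $g$ in the $(\varphi,\rho)$--exponents, the edge joining $(1,1)$ to $(0,2)$ produces, through a simple root of its edge polynomial, an \emph{analytic} branch $\varphi=c(\rho)=O(\rho)$ of $\{g=0\}$ whose very existence rests on $G_{r+1}'(\varphi^*)\neq0$. The orientation--preserving, $\rho$--frozen shift $\varphi\mapsto\varphi-c(\rho)$ straightens this branch to $\{\varphi=0\}$, after which $g$ vanishes identically on $\varphi=0$ and factors as $\varphi\,\Psi(\varphi,\rho)$; here $\Psi$ has $\varphi$--order $k-1$ and $\Psi(0,\rho)$ a simple zero in $\rho$. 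Applying the Malgrange preparation theorem to $\Psi$ furnishes a positive unit $U$ and coefficients vanishing at $\rho=0$ with
$$g(\varphi,\rho)=U(\varphi,\rho)\,\varphi\Big(\delta_1\varphi^{\,k-1}+\sum_{j=0}^{k-2}a_j(\rho)\,\varphi^{\,j}\Big),$$
and a Tschirnhaus shift removes the $\varphi^{\,k-2}$ term; all these operations are admissible for strong equivalence because they keep $\rho$ frozen, preserve orientation, and multiply by a positive function.

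The crux, and the step I expect to be the main obstacle, is the final normalisation: annihilating the intermediate coefficients $a_2(\rho),\dots,a_{k-2}(\rho)$ and carrying the simple-zero coefficient $a_0(\rho)$ to $\delta_2\rho$ while fixing $a_1\equiv0$. Because $\rho$ may not be reparametrised, this cannot be done by the naive device of adopting $a_0(\rho)$ as a new parameter; the solvability of the resulting functional equations is precisely what the non-degeneracy $G_{r+1}'(\varphi^*)\neq0$ guarantees and is the technical heart of the cited Golubitsky--Schaeffer proposition. Finally, the signs $\delta_1=\mathrm{sgn}(G_r^{(k)}(\varphi^*))$ and $\delta_2=\mathrm{sgn}(G_{r+1}'(\varphi^*))$ are genuine strong-equivalence invariants, since a positive unit and an orientation-preserving $\varphi$--diffeomorphism can alter neither the sign of the leading $\varphi^k$ coefficient nor that of the mixed second derivative; this fixes the signs in $\hat\Theta$ and completes the identification.
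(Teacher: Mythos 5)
Your proposal matches the paper's treatment: the paper offers no independent proof of this lemma, stating only that it is Proposition 9.2 of Chapter II, page 95 of \cite{GS} translated into the paper's notation, and your first two paragraphs do precisely this translation --- identifying $\partial_\varphi^{\,j}\Theta(\varphi^*,0)=G_r^{(j)}(\varphi^*)$, $\partial_\rho\Theta(\varphi^*,0)=G_{r+1}(\varphi^*)=0$ and $\partial_\varphi\partial_\rho\Theta(\varphi^*,0)=G'_{r+1}(\varphi^*)\neq 0$ as the recognition data for the normal form $\delta_1\varphi^k+\delta_2\varphi\rho$ under strong equivalence --- before invoking the same proposition. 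Your supplementary self-contained reduction (branch straightening, Malgrange preparation, Tschirnhaus shift) goes beyond what the paper records, and you correctly flag that its genuinely hard step, the elimination of the intermediate coefficients with $\rho$ frozen, is exactly what the cited Golubitsky--Schaeffer result supplies, so nothing is claimed that is not covered by the citation.
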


The following result is based on the classification of all the singularities of codimension three or less for the map $\Theta$ at the singularity $(\varphi, \rho) = (\varphi^*, 0)$ with $\varphi^* \in \Omega_{pq}$ which lead to elementary bifurcation problems. The notation in the following classification lemma is the following one: (i) $D^2 \Theta$ stands for the Hessian matrix of $\Theta$ at $(\varphi, \rho) = (\varphi^*, 0)$; (ii) By $\Theta_{v}$ we mean a directional derivative along the eigenvector $v$ associated with the zero eigenvalue of the Hessian $D^2 \Theta$, that is, $\partial_v = v_1 \partial_\varphi + v_2 \partial_\rho$ being $v=(v_1, v_2)$  such that $(D^2 \Theta) v = 0$. The following lemma is a summary of the main results of section 2 in chapter IV of \cite{GS} taking into acount the monodromy of the origin of \eqref{DCC-1}.

\begin{lemma}\label{elementary}
Let $\Theta(\varphi, \rho)$ be the analytic map with expansion \eqref{expansion-Theta} about $\rho=0$  and the point $(\varphi, \rho) = (\varphi^*, 0)$ with $\varphi^* \in \Omega_{pq}$ a singularity of $\Theta$. We define $k \geq 2$ as the multiplicity of $G_r$ at $\varphi^*$. If the origin of \eqref{DCC-1} is monodromic, then $k$ is even. Moreover, the singularity is elementary, i.e., ${\rm codim}(\Theta) \leq 3$ if and only if one of the following conditions hold:
\begin{itemize}
\item[(a)] If $k=2$ and $\det(D^2 \Theta) \neq 0$ then $\hat\Theta(\varphi, \rho) = \delta_1(\theta^2 + \delta_2 \rho^2)$ where $\delta_1 = {\rm sgn}(G''_r(\varphi^*))$ and $\delta_2 = {\rm sgn}(\det(D^2 \Theta))$.

\item[(b)] If $k=2$, $\det(D^2 \Theta) = 0$, and $\Theta_{vvv} \neq 0$ then the normal form is $\hat\Theta(\varphi, \rho) = \delta_1 \varphi^2 + \delta_2 \rho^3$ where $\delta_1 = {\rm sgn}(G_r'''(\varphi^*))$ and $\delta_2 = {\rm sgn}(\Theta_{vvv})$.

\item[(c)] If $k=2$, $\det(D^2 \Theta) = \Theta_{vvv} = 0$ and $s := \Theta_{vvv} \, G_r''(\varphi^*) -3 \Theta^2_{vv\varphi} \neq 0$ then the normal form is $\hat\Theta(\varphi, \rho) = \delta_1 \varphi^2 + \delta_2 \rho^4$ where $\delta_1 = {\rm sgn}(G_r''(\varphi^*))$ and $\delta_2 = {\rm sgn}(s)$.

\item[(d)] $k = 4$ and Lemma \ref{non-elementary} applies.
\end{itemize}
\end{lemma}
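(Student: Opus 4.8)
The plan is to read the equation $\Theta(\varphi,\rho)=0$ as a germ of a bifurcation problem in the Golubitsky--Schaeffer sense, with state variable $\varphi$ and distinguished parameter $\rho$, and to apply the classification of such germs of codimension at most three, up to the strong equivalence \eqref{equivalency}, together with the solution of the associated recognition problem carried out in section~2 of chapter~IV of \cite{GS}. Conditions (a)--(d) will then emerge by translating the intrinsic defining and nondegeneracy conditions attached to each normal form of that classification into explicit conditions on the Taylor data of $\Theta$, which through the expansion \eqref{expansion-Theta} become conditions on $G_r,G_{r+1},G_{r+2}$ and their $\varphi$-derivatives at $\varphi^*$.

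First I would settle the parity of $k$. By monodromy of the origin the leading angular speed $\Theta(\varphi,0)=G_r(\varphi)$ keeps a constant sign, so after the normalization of the preceding remark we may take $G_r\geq 0$ on $\mathbb{S}^1$. An analytic semi-definite function has only zeros of even order, since an odd-order zero would force a sign change; hence the multiplicity $k$ of $G_r$ at $\varphi^*$ is even. I would also record that the two defining conditions of a bifurcation singularity hold automatically at $(\varphi^*,0)$: one has $\Theta(\varphi^*,0)=G_r(\varphi^*)=0$ because $\varphi^*\in\Omega_{pq}$, and $\partial_\varphi\Theta(\varphi^*,0)=G_r'(\varphi^*)=0$ because $\varphi^*$ is a critical point of $G_r$ under monodromy.

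Next I would run the classification, stratified by the order of vanishing $k$ of the restriction $\Theta(\cdot,0)=G_r$, which is an invariant of strong equivalence. For $k=2$ one has $\partial_{\varphi\varphi}\Theta(\varphi^*,0)=G_r''(\varphi^*)\neq 0$, and the Hessian $D^2\Theta(\varphi^*,0)$ has entries $G_r''(\varphi^*)$, $G_{r+1}'(\varphi^*)$ and $2G_{r+2}(\varphi^*)$, so $\det D^2\Theta=2G_r''(\varphi^*)G_{r+2}(\varphi^*)-(G_{r+1}'(\varphi^*))^2$. If this determinant is nonzero the point is a Morse (nondegenerate) critical point and the Morse lemma yields the quadratic normal form of~(a); if it vanishes the Hessian has rank one with null vector $v=(G_{r+1}'(\varphi^*),-G_r''(\varphi^*))$, and the reduction procedure of singularity theory produces a one-variable residual germ whose successive leading coefficients are the directional derivatives $\Theta_{vvv},\Theta_{vv\varphi}$ along $v$, giving the cubic normal form of~(b) when $\Theta_{vvv}\neq 0$ and, after the further degeneracy measured by $s$, the quartic normal form of~(c). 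The only remaining elementary possibility compatible with $k$ even is $k=4$, in which case $G_r''(\varphi^*)=0$ and Lemma~\ref{non-elementary} applies, producing the normal form of~(d) under the simple-zero condition on $G_{r+1}$. Higher even $k$, or further vanishing of the successive invariants, raises the codimension above three and is excluded, which yields the ``only if'' direction. Throughout, the signs $\delta_1,\delta_2$ are determined by the leading Taylor coefficients together with the monodromic normalization $G_r\geq 0$.

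The main obstacle I anticipate is twofold. Computationally, expressing the intrinsic quantities $\Theta_{vvv}$, $\Theta_{vv\varphi}$ and $s=\Theta_{vvv}\,G_r''(\varphi^*)-3\Theta_{vv\varphi}^2$ in closed form requires the null vector $v$ and enough terms of the three- and four-jets of $\Theta$, hence several further $\varphi$-derivatives of $G_r,G_{r+1},G_{r+2}$ (and a priori of the next coefficient in \eqref{expansion-Theta}); this is the routine but delicate bookkeeping that pins down the exact sign conventions. Conceptually, the genuine content is the completeness assertion---that (a)--(d) exhaust all singularities of codimension at most three once the constraint ``$k$ even'' is imposed---and this is precisely what the recognition theorems of chapter~IV of \cite{GS} deliver, so the proof ultimately reduces to checking that the monodromic normalization is compatible with, and merely selects among, their normal forms.
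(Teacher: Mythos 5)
Your proposal follows exactly the route the paper itself takes: the paper gives no separate proof of Lemma~\ref{elementary}, stating only that it is a summary of the classification and recognition results of section~2 of chapter~IV of \cite{GS} applied to the bifurcation germ $\Theta(\varphi,\rho)$ with state variable $\varphi$ and distinguished parameter $\rho$, combined with the observation that monodromy makes $G_r$ semi-definite and hence forces the multiplicity $k$ to be even. Your reduction to the Golubitsky--Schaeffer recognition theorems, the parity argument via sign-definiteness of the analytic function $G_r$, and the identification of the Hessian data $\det D^2\Theta=2G_r''(\varphi^*)G_{r+2}(\varphi^*)-(G_{r+1}'(\varphi^*))^2$ and null vector $v=(G_{r+1}'(\varphi^*),-G_r''(\varphi^*))$ are all correct and simply make explicit what the paper leaves to the citation.
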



\end{document}